\newtheorem{theorem}{Theorem}[section]
\newtheorem{lemma}[theorem]{Lemma}
\newtheorem{proposition}[theorem]{Proposition}
\theoremstyle{definition}
\newtheorem{definition}[theorem]{Definition}
\theoremstyle{remark}
\newtheorem{remark}[theorem]{Remark}
\newtheorem{example}[theorem]{Example}
\numberwithin{equation}{section}
\DeclareMathAlphabet{\mathbbm}{U}{bbm}{m}{n}
\newcommand{\uno}{\mathbbm{1}}
\newcommand{\e}{\operatorname{e}}  
\newcommand{\im}{\mathrm{i}}       
\newcommand{\N}{\mathbf{N}}        
\newcommand{\Z}{\mathbf{Z}}        
\newcommand{\R}{\mathbf{R}}        
\newcommand{\loc}{{\mathrm{loc}}}
\newcommand{\map}{\mathscr{F}}
\newcommand{\mapV}{\mathscr{V}}
\newcommand{\X}{\mathcal{X}}
\newcommand{\B}{\mathcal{B}}
\newcommand{\vmo}{\mathcal{Z}}
\newcommand{\eps}{\epsilon}
\newcommand{\term}[1]{\text{\textcircled{\sf\footnotesize #1}}}
\begin{document}
  \title[Largest critical space for surface growth]{Local existence and uniqueness in the largest critical space for a surface growth model}
  \author[D. Bl\"omker]{Dirk Bl\"omker}
    \address{Institut f\"ur Mathematik\\ Universit\"at Augsburg\\ D-86135 Augsburg, Germany}
    \email{dirk.bloemker@math.uni-augsburg.de}
    \urladdr{\url{http://www.math.uni-augsburg.de/ana/bloemker.html}}
  \author[M. Romito]{Marco Romito}
    \address{Dipartimento di Matematica, Universit\`a di Firenze\\ Viale Morgagni 67/a\\ I-50134 Firenze, Italia}
    \email{romito@math.unifi.it}
    \urladdr{\url{http://www.math.unifi.it/users/romito}}
  \thanks{This work has been partially supported by the GNAMPA project 
    \emph{Studio delle singolarit\`a di alcune equazioni legate a modelli idrodinamici}.
    Part of the work was done at the Newton institute for Mathematical Sciences
    in Cambridge (UK), whose support is gratefully acknowledged, during the
    program "Stochastic partial differential equations". The authors would like
    to thank Herbert Koch for pointing out the approach to the problem.}
  \subjclass[2000]{35B33, 35B45, 35B65, 35K55, 35Qxx, 60H15}
  \keywords{surface growth, critical space, uniqueness, regularity}
  \date{Cambridge, March 22, 2010}
  \begin{abstract}
    We show the existence and uniqueness of solutions 
(either local or global for small data) for an equation
    arising in different aspects of surface growth. Following the work of Koch
    and Tataru we consider spaces critical with respect to scaling and we prove
    our results in the largest possible critical 
    space such that weak solutions are defined. 
    The uniqueness of global weak solutions remains unfortunately open, unless the
    initial conditions are sufficiently small.
  \end{abstract}
\maketitle

\section{Introduction}


The analysis of mathematical models for the study of surface growth has attracted
a lot of attention in recent years, one can see for example the reviews in
\cite{BaSt95, HHZh95} and numerous recent publications. See for example 
\cite{Rai-a,Rai-b,Rai-c,sput-1,sput-2,sput-3,FrVe:06}, which we comment in detail later.

In this article we consider a model arising in the growth of amorphous surfaces
which is described by the following partial differential equation,
\begin{equation}\label{e:sg}
\partial_t h + \Delta^2 h + \Delta|\nabla h|^2 = 0.
\end{equation}
on the whole $\R^d$ or with periodic boundary conditions.
The function $h(t,\cdot)$ models a height profile
at time $t>0$, so $d=1$ and $d=2$ are 
the physically relevant dimensions. 
In view of this and of Proposition \ref{p:icok} 
we will restrict the analysis to the case $d\le 3$ 
throughout this paper (although most of the computation 
holds without restrictions on the dimension).

Equation \eqref{e:sg}, which is sometimes referred to as a conservative version 
of the Kuramoto-Sivashinsky equation, arises also in several other models for
surface growth. The two--dimensional version was suggested in \cite{Rai-a,Rai-b,Rai-c} 
as a phenomenological model for the growth of an amorphous surface
(Zr$_{65}$Al$_{7,5}$Cu$_{27,5}$) and more recently as a model in surface
erosion using ion-beam sputtering \cite{sput-1,sput-2,sput-3}.
The one-dimensional equation appeared as  a model for the boundaries of
terraces in the epitaxy of Silicon \cite{FrVe:06}.

For simplicity of presentation we consider the rescaled version~\eqref{e:sg} 
with a--dimensional length-scales. Furthermore, we have ignored lower order
terms  like the Kuramoto-Sivashinsky term $-|\nabla h|^2$ or a linear
instability given by $+\Delta h$. These terms can easily be incorporated in
the result.

In the physical literature equation \eqref{e:sg} is usually subject 
to space-time white noise, which we also have neglected for 
simplicity of presentation. Indeed, using the standard method of 
looking at the difference between $h$ and the stochastic convolution,
the stochastic PDE can be transformed in a random PDE.
If the stochastic convolution is sufficiently regular,
then for each instance of chance the path-wise solvability  
for the stochastic PDE  is completely analogous to the results 
presented here and one only needs to consider additional lower order terms.
This will be done with more details later in Section~\ref{sec:SPDE}.

A crucial open problem for equation~\eqref{e:sg}, 
is the fact that the uniqueness of global solutions is not known.
We remark that numerical experiments do not report any problems of blow up,
see Hoppe and Nash~\cite{HoNa04,HoNa02}, or the previously stated physics
literature. Numerical experiments from Bl\"omker, Gugg and Raible~\cite{BlGuRa02} 
furthermore indicate a fast convergence of spectral Galerkin methods for
averaged surface roughness for the stochastic PDE.

The existence of global weak solutions in dimension $d=1$ on bounded domains
has been studied in \cite{BlGuRa02} (see also the references therein), based
on spectral Galerkin methods. The crucial estimates are energy-type inequalities 
which allow for uniform bounds on the $L^2$-norm. The method has been significantly 
extended by Bl\"omker, Flandoli and Romito~\cite{BlFlRo09} 
in order to verify the existence of a solution that defines a Markov process.
Winkler and Stein \cite{StWi05} used Rothe's method to verify the existence
of a global weak solution, this result has been recently extended 
by Winkler~\cite{Wi:P} to the two--dimensional case, using energy type
estimates for $\int e^h\,dx$.  
 
The authors have showed in \cite{BloRom09} the uniqueness of local solutions
with initial values in the critical Hilbert space $H^{1/2}$ in the one
dimensional case. Local uniqueness of continuous solutions in $W^{1,4}$ for
the stochastic PDE in dimension $d=1,2$ can be found in \cite{BlGu04}.
A regularized problem with a cut-off in the nonlinearity in dimension $d=2$
has been studied in Hoppe, Linz and Litvinov~\cite{HoLiLi03}.

In this paper we study existence and uniqueness of solutions with initial
data in a space of BMO--type, which contains all previous spaces where analogous
results were proved. For periodic boundary conditions it allows for unique
local solutions with arbitrary initial data in $H^{1/2}$ or the space of
continuous functions $C^0$.   

A weak solution for~\eqref{e:sg} with initial condition $h_0\in L^1_\loc(\R^d)$
is any distribution $h$ on $\R^d$ with locally square integrable gradient
$\nabla h\in L^2_\loc([0,\infty)\times\R^d)$ such that for every smooth and
compactly supported test function $\phi\in C^\infty_c([0,\infty)\times\R^d)$,
\begin{multline}
\label{e:weak}
\int_0^\infty\int_{\R^d} h(t,x)\frac{\partial\phi}{\partial t}(t,x)\,dx\,dt
 - \int_0^\infty\int_{\R^d} h(t,x)\Delta^2\phi(t,x)\,dx\,dt +{}\\
 - \int_0^\infty\int_{\R^d} |\nabla h(t,x)|^2\Delta\phi(t,x)\,dx\,dt
 = -\int_{\R^d} h_0(x)\phi(0,x)\,dx.
\end{multline}
Note that $\nabla h\in L^2_\loc([0,\infty)\times\R^d)$ implies also
$h\in L^2_\loc([0,\infty)\times\R^d)$ (cf. Lemma~\ref{lem:disreg}) and
thus all terms in in~\eqref{e:weak} are well-defined,
not only in the sense of distributions.
Moreover, the solution is only defined up to constants.

Following the remarkable paper by Koch and Tataru \cite{KocTat01}, this article provides
a local existence and uniqueness result in the largest critical
 space, where the  above stated definition
of weak solutions makes sense. As the equation is translation
invariant (in space) and invariant with respect to the scaling
\begin{equation}\label{e:scaling}
  h(t, x) \longrightarrow h(\lambda^4 t, \lambda x),
\end{equation}
we consider the following scaling-aware invariant version of the $L^2_\loc$ space
for the gradient $\nabla h$,
\begin{equation}\label{e:spaceX0}
\|h\|_{\X^0}
 := \Bigl(\sup_{x\in\R^d,R>0}\Big\{\frac1{R^{d+2}}\int_0^{R^4}\int_{B_R(x)} |\nabla h|^2\,dy\,dt\Big\}\Bigr)^{\frac12}.
\end{equation}
The paper is organized as follows. In Section~\ref{sec:FS} we discuss the space 
defined by \eqref{e:spaceX0} and show an equivalent representation,
and its relation to BMO-type spaces. Some admissible initial conditions and
examples are discussed in Section~\ref{sec:exp}.

Based on Banach's fixed-point iteration-scheme, Section~\ref{sec:FPA} provides
the existence  and uniqueness results. Section~\ref{sec:SPDE} contains some
details on the extension of such results to the stochastically forced case.
We close the paper with Section \ref{sec:smooth}, where we show smoothness
of solutions. 
\section{Function spaces}\label{sec:FS}

Recall first the following result, an easy consequence of Poincar\'e's
inequality, which ensures that all integrals in~\eqref{e:weak} are well
defined.
\begin{lemma}\label{lem:disreg}
If $u$ is a distribution on $\R^d$ such that
$\nabla u\in L^2_\loc([0,\infty)\times\R^d)$, then
$u\in L^2_\loc([0,\infty)\times\R^d)$ and thus
$u\in L^2_\loc([0,\infty), H^1_\loc(\R^d))$.
\end{lemma}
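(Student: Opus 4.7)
The plan is a Poincar\'e-plus-Fubini argument, together with the paper's convention that solutions are defined modulo constants. First I would localize: fix a ball $B\subset\R^d$ and a time $T>0$, aiming to show that a suitable representative of $u$ belongs to $L^2([0,T]\times B)$, the general conclusion then following by exhausting $\R^d$ with balls and $[0,\infty)$ with finite time intervals. By Fubini the hypothesis yields $\nabla_x u(t,\cdot)\in L^2(B)$ for almost every $t\in[0,T]$, with
\[
\int_0^T\|\nabla_x u(t,\cdot)\|_{L^2(B)}^2\,dt<\infty.
\]

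For each such $t$, the spatial distribution $u(t,\cdot)$ has its gradient in $L^2(B)$, and a standard mollification argument (a variant of the de Rham / Weyl lemma) represents it as an $H^1(B)$ function, unique up to an additive constant. Choosing the canonical normalization $c(t):=|B|^{-1}\int_B u(t,y)\,dy$, which is measurable in $t$ by Fubini, Poincar\'e's inequality on $B$ gives
\[
\|u(t,\cdot)-c(t)\|_{L^2(B)}\le C_B\,\|\nabla_x u(t,\cdot)\|_{L^2(B)},
\]
with $C_B$ depending only on $B$. Squaring and integrating in $t$ yields $u-c(t)\in L^2([0,T]\times B)$.

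The function $c(t)$ depends only on time and can be discarded under the ``defined up to constants'' convention; note also that such a purely time-dependent term integrates to zero against $\Delta^2\phi$ and $\Delta\phi$ in~\eqref{e:weak} after integration by parts on the compactly supported $\phi$, so the weak formulation is insensitive to it. Exhausting $\R^d$ and $[0,\infty)$ then gives $u\in L^2_\loc([0,\infty)\times\R^d)$, and combining with the hypothesis on $\nabla u$ immediately yields $u\in L^2_\loc([0,\infty),H^1_\loc(\R^d))$.

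The main subtlety is really just the role of the time-dependent constant $c(t)$: the stated conclusion is only true modulo such a term, consistent with the paper's convention (and needed, since e.g.\ any distribution of the form $\delta_{t_0}(t)\otimes 1$ has vanishing spatial gradient but is not in $L^2_\loc$). A minor technical point is the measurability of $c(t)$, which follows from Fubini applied to its defining spatial average.
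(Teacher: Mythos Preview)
Your approach is exactly what the paper has in mind: it states the lemma without proof, calling it ``an easy consequence of Poincar\'e's inequality,'' and your Poincar\'e-plus-Fubini argument together with the modulo-constants convention is the intended route; your observation that $c(t)$ may be a genuine time-distribution (with the $\delta_{t_0}\otimes 1$ counterexample) is a nice sharpening. One small circularity to clean up: defining $c(t):=|B|^{-1}\int_B u(t,y)\,dy$ presupposes $u\in L^1_\loc$, which is what you are proving---instead, reconstruct the zero-mean primitive $w_t\in H^1(B)$ directly from the curl-free slice $\nabla_x u(t,\cdot)\in L^2(B)$ (or run the whole argument on space--time mollifications $u_\eps$ and pass to the limit), and then identify $u-w$ as a distribution in $t$ alone.
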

We consider the linear space 
$\X^0$ of functions $h$ with  
 $|\nabla h| \in L^2_\loc((0,\infty)\times\R^d)$ 
and thus $ h \in L^2_\loc((0,\infty)\times\R^d)$
such that the quantity $\|h\|_{\X^0}$ is finite.

Furthermore, we define the linear space $\X$ of
functions such that the following norm is finite.
\begin{equation}\label{e:spaceX}
\|k\|_\X
 = \sup_{t>0}\bigl\{t^{\frac14}\|\nabla k(t)\|_\infty\bigr\}.
\end{equation}
A local in time version of these spaces can be defined for any $R>0$ by
\[
\begin{gathered}
  \|k\|_{\X_R^0}^2
    := \sup_{x\in\R^d,r\leq R}\Big\{\frac1{r^{d+2}}\int_0^{r^4}\int_{B_r(x)} |\nabla k(t,y)|^2\,dy\,dt
    \Big\}\;,\\
  \|k\|_{\X_R}
    := \sup_{t\leq R^4}\bigl(t^{\frac14}\|\nabla k(t)\|_\infty\bigr).
\end{gathered}
\]
for functions $k:[0,R^4]\times\R^d\to\R$.
Note that we always identify functions that differ only by a constant. This
is motivated by the fact that the equation is mass-conservative, if the total
mass $\int h\,dx$ is finite.
Furthermore, solutions are only defined up to additive constants.

In order to track the corresponding spaces for initial values, let
$A=\Delta^2$. Consider the Green's function $G: [0,\infty)\times\R^d\to \R$ 
associated to the operator $A$, where $G(t,x)$ has the
Fourier transform (w.r.t.~$x$) $\widehat G(t,\xi) = \e^{-t|\xi|^4}$.  
By scaling we obtain
\[
  G(t,x)
    = t^{-d/4} g(xt^{-1/4}),
      \quad\text{where}\quad 
  g(x)
    = G(1,x).
\]
The function $g$ is in the Schwartz class since $\widehat g(\xi) = \e^{-|\xi|^4}$.

Define the semigroup $\e^{-tA}$ by the convolution $\e^{-tA}k = G(t,\cdot)\star k$.
Define the space $\B^0$ of all functions $k:\R^d\to\R^d$  
such that  the \emph{bi-caloric} extension $\e^{-tA}k$ is in $\X^0$,
endowed with the semi-norm
\[
  \|k\|_{\B^0}
    := \|\e^{-tA}k\|_{\X^0},
\]
and the space $\B$ of all functions $k:\R^d\to\R^d$ such that
\[
  \|k\|_\B
    := \|\e^{-tA}k\|_\X
\]
is finite, endowed with the semi-norm $\|\cdot\|_{\B}$.
Define similarly the local versions $\B^0_R$ and $\B_R$ of these spaces.

In contrast to the case of Navier--Stokes in dimension three~\cite{KocTat01},
the spaces $\B$ and $\B^0$ (as well as their local counterparts) turn out to
be equivalent, as shown by the proposition stated below. This makes the
analysis simpler than in the Navier--Stokes case. This proposition is the only 
reason, why we restrict to dimension $d\le3$, as we rely for simplicity on the
equivalence of $\B$ and $\B0$. 
\begin{proposition}\label{p:icok}
Assume $d\leq 3$. Then there are constants $c_1$, $c_2>0$ such that
\begin{equation}\label{e:icokglobal}
c_1\|\cdot\|_{\B}\leq\|\cdot\|_{\B^0}\leq c_2\|\cdot\|_{\B}
\end{equation}
and
\begin{equation}\label{e:icoklocal}
c_1\|\cdot\|_{\B_R}\leq\|\cdot\|_{\B^0_R}\leq c_2\|\cdot\|_{\B_R},
\end{equation}
for every $R>0$.

Moreover, for every $d\geq1$, there exists $c_3>0$ such that
\[
  \|k\|_{\B^0} \leq c_3\|k\|_{BMO(\R^d)},
\]
so $BMO(\R^d)\subset\B^0$ and in particular $L^\infty(\R^d)\subset\B^0$.
\end{proposition}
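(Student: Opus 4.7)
The plan is to prove \eqref{e:icokglobal} as the equivalence, on bi-caloric extensions $h=\e^{-tA}k$, of the norms $\|\cdot\|_\X$ and $\|\cdot\|_{\X^0}$. The local estimate \eqref{e:icoklocal} will follow by running the same argument with the scaling parameter kept below $R$, while the $BMO$-embedding is handled separately. The easy direction $\|h\|_{\X^0}\le c_2\|h\|_\X$ uses the pointwise bound $\|\nabla h(t)\|_\infty\le t^{-1/4}\|h\|_\X$: integrating on $B_R(x)\times(0,R^4)$ contributes the volume factor $R^d$ and the time factor $\int_0^{R^4}t^{-1/2}\,dt=2R^2$, which jointly cancel the normalizing $R^{-(d+2)}$.

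For the reverse inclusion, translation and the scaling $h(t,x)\mapsto h(\lambda^4 t,\lambda x)$ preserve $\|h\|_{\X^0}$, so it suffices to show that $|\nabla h(1,0)|\le C\|h\|_{\X^0}$. My approach is to exploit the semigroup identity
\[
\nabla h(1,0)=\int_{\R^d}G(1-s,-y)\,\nabla h(s,y)\,dy,\qquad s\in(0,1),
\]
averaging in $s\in I\subset(0,1)$ and applying Cauchy--Schwarz together with $\int|G(s,\cdot)|\,dy=\|g\|_{L^1}$ to reduce matters to bounding $\int_I\!\int_{\R^d}|G(1-s,-y)|\,|\nabla h(s,y)|^2\,dy\,ds$. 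I split $\R^d$ into $B_1$ and dyadic annuli $\{|y|\sim 2^k\}_{k\ge 1}$; on each annulus the Schwartz decay of $g$ gives $|G(1-s,-y)|\le C|y|^{-M}$ for any $M$, and the definition of $\X^0$ applied at scale $R=2^k$ supplies the growth $(2^k)^{d+2}\|h\|_{\X^0}^2$, with $M>d+2$ making the dyadic series convergent. On $B_1$, the bound $|G(1-s,-y)|\le(1-s)^{-d/4}\|g\|_\infty$ combined with $\int_0^1\!\int_{B_1}|\nabla h|^2\le\|h\|_{\X^0}^2$ leaves a time factor $\int(1-s)^{-d/4}\,ds$ which is integrable near $s=1$ precisely when $d<4$; this is the one step where the restriction $d\le 3$ enters.

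For the $BMO$-embedding, which holds in every dimension, I plan the classical decomposition $k=c+k_1+k_2$ with $c$ the mean of $k$ on $B_{2R}(x_0)$, $k_1=(k-c)\uno_{B_{2R}(x_0)}$, and $k_2=(k-c)\uno_{\R^d\setminus B_{2R}(x_0)}$. The constant is annihilated by $\nabla\e^{-tA}$; for $k_1$, the $BMO$-inequality gives $\|k_1\|_{L^2}\le CR^{d/2}\|k\|_{BMO}$ and Young's inequality applied to $\nabla G(t)*k_1$ together with $\int_0^{R^4}t^{-1/2}\,dt=2R^2$ produces the required $R^{d+2}\|k\|_{BMO}^2$; for $k_2$, a pointwise bound on $B_R(x_0)$ follows from dyadically decomposing $|z-x_0|\sim 2^jR$, exploiting the rapid decay of $\nabla G$, and using the classical estimate $|k_{B_{2^jR}}-k_{B_{2R}}|\le Cj\|k\|_{BMO}$, after which the time integration is harmless. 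The main obstacle throughout is the reverse inclusion: the averaged $\X^0$-information must be upgraded to a pointwise gradient estimate, and the proof hinges on balancing the singularity of the kernel against the $L^2$-control on $\nabla h$; it is the integrability of this singularity that ultimately pins the dimension at $d\le 3$.
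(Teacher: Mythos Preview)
Your outline follows the paper's strategy closely (reduce by scaling to $t=1$, $x=0$; average the semigroup identity in $s$; Cauchy--Schwarz; spatial decomposition), but there is a genuine gap in the $B_1$ step of the reverse inclusion. After your weighted Cauchy--Schwarz the $B_1$ contribution is
\[
\int_I\!\int_{B_1}|G(1-s,-y)|\,|\nabla h(s,y)|^2\,dy\,ds
\;\le\;\|g\|_\infty\int_I(1-s)^{-d/4}\Bigl(\int_{B_1}|\nabla h(s,y)|^2\,dy\Bigr)\,ds,
\]
and you claim this ``leaves a time factor $\int(1-s)^{-d/4}\,ds$''. It does not: the $\X^0$-norm controls only the \emph{time integral} $\int_0^1\!\int_{B_1}|\nabla h|^2\le\|h\|_{\X^0}^2$, not $\int_{B_1}|\nabla h(s,\cdot)|^2$ pointwise in $s$, so the singular weight cannot be separated from $|\nabla h|^2$. (If instead you keep $I$ bounded away from $s=1$, say $I=(0,\tfrac12)$, the kernel is uniformly bounded on $B_1$ and your argument closes with no dimensional restriction at all --- so your identification of where $d\le 3$ enters collapses as well.)

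The paper avoids this by applying the \emph{unweighted} Cauchy--Schwarz on each ball $B_n$ of a unit-lattice cover, placing the whole kernel into one factor:
\[
\Bigl(\int_0^1\!\int_{B_n}(1-s)^{-d/2}\,g\bigl(y(1-s)^{-1/4}\bigr)^2\,dy\,ds\Bigr)^{1/2}
\Bigl(\int_0^1\!\int_{B_n}|\nabla h|^2\,dy\,ds\Bigr)^{1/2}.
\]
Now the second factor is $\le\|h\|_{\X^0}$ directly, with no weight attached. A change of variables turns the first into $\bigl(\int_0^1 s^{-d/4}\int_{s^{-1/4}B_n}g(z)^2\,dz\,ds\bigr)^{1/2}$; for $0\in B_n$ the inner integral increases to $\|g\|_{L^2}^2$ as $s\downarrow 0$, so finiteness forces $d/4<1$, i.e.\ $d\le 3$. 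Schwartz decay of $g$ makes the lattice sum converge. You can repair your argument the same way: decompose first and then, on $B_1$, use unweighted Cauchy--Schwarz rather than the $|G|$-weighted one.

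Your $BMO$ argument via the decomposition $k=c+k_1+k_2$ is correct but laborious compared with the paper's one-line proof: writing both $\|k\|_{\B^0}^2$ and (an equivalent form of) $\|k\|_{BMO}^2$ as Carleson-type suprema of $|(\nabla\phi)_t\star k|^2$, the $\B^0$-integrand differs from the $BMO$-integrand only by a factor $t^2/R^2\le 1$.
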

For a definition of the space $BMO(\R^d)$ of functions of bounded mean oscillation 
and its properties, we refer to
Stein \cite{Ste93}. Here we only use an equivalent norm on $BMO(\R^d)$
given by the Carleson measure characterization (see \eqref{e:BMO}).
\begin{proof}
We start by proving~\eqref{e:icokglobal}. The inequality on the right holds
in any dimension $d\geq1$ since it is straightforward to check that there is
$c>0$ such that $\|\cdot\|_{\X^0}\leq c\|\cdot\|_\X$. For the inequality
on the left, we need to show that for $k\in\B^0$, $x\in\R^d$, $t>0$,
\[
  |t^{\frac14}\nabla(\e^{-tA}k)(x)|
     = t^{\frac14}\Bigl|\nabla\int_{\R^d} G(t,x-y)k(y)\,dy\Bigr|
    \leq c\|k\|_{\B^0}.
\]
By scaling and translations invariance, it is sufficient to show the statement
for $t=1$ and $x=0$. Since
\[
  \e^{-A} = \int_0^1\e^{-(1-s)A}\e^{-sA}\,ds,
\]
it follows by the Cauchy Schwartz inequality that
\[
\begin{aligned}
  |\nabla(\e^{-A}k)(0)|
    & = \Bigl|\int_0^1{(1-s)^{-\frac{d}4}}
          \int_{\R^d} g\bigl(y(1-s)^{-\frac14}\bigr)
	  \nabla(\e^{-sA}k)(y)\,dy\,ds\Bigr|\\
    &\leq \sum_{n\in\Z^d}\Bigl|\int_0^1(1-s)^{-\frac{d}4}
            \int_{B_n} g\bigl(y(1-s)^{-\frac14}\bigr)\nabla(\e^{-sA}k)(y)\,dy\,ds\Bigr|\\
    &\leq \sum_{n\in\Z^d} \Bigl(\int_0^1(1-s)^{-\frac{d}{2}}
	      \int_{B_n} g\bigl(y(1-s)^{-\frac14}\bigr)^2\,dy\,ds\Bigr)^{\frac12}\\
    &         \hspace*{4cm} \times
	      \Bigl(\int_0^1\int_{B_n}|\nabla(\e^{-sA}k)(y)|^2\,dy\,ds\Bigr)^{\frac12}\\
    &\leq c\|k\|_{\B^0} \sum_{n\in\Z^d} \Bigl(\int_0^1s^{-\frac{d}{2}}\int_{B_n} g\bigl(ys^{-\frac14}\bigr)^2\,dy\,ds\Bigr)^{\frac12},
\end{aligned}
\]
where $B_n$ are the balls of centre $2d^{-1/2}n$ and radius $1$ (so that their
union covers $\R^d$). By a change of variables,
\[
  I_n
    := \int_0^1s^{-\frac{d}{2}}\int_{B_n} g\bigl(ys^{-\frac14}\bigr)^2\,dy\,ds
    = \int_0^1s^{-\frac{d}{4}}\int_{s^{-1/4}B_n} g(z)^2\,dz\,ds.
\]
First, $|I_n|\le C$ for all $n\in\Z^d$, as $d\le3$ and $g\in L^2(\R^d)$.
Note that $d\le3$ is necessary, as for $0\in B_n$ we have 
$\int_{s^{-1/4}B_n} g(z)^2\,dz \uparrow \|g\|^2_{L^2}$ for $s\downarrow 0$.

For the convergence of the series consider for $s\in(0,1)$ 
and $0\not \in B_n$ (i.e.~$2|n|>\sqrt{d}$) that 
\[
\begin{aligned}
 \int_{s^{-1/4}B_n} g(z)^2\,dz
   &\leq \int_{\R^d}|g(z)|\,dz \cdot \sup\left\{|g(z)| \ : \ |z|\in s^{-1/4}B_n \right\} \\
   &\leq C\sup\left\{|g(z)| \ : \ |z|> 2|n|d^{-1/2}-1 \right\},
\end{aligned}
\]
which can be bounded by a summable term, since  $g$ is in the Schwartz class.
The inequality~\eqref{e:icoklocal} for the local spaces proceeds similarly.

Let $\phi$ be in the Schwartz class, with $\widehat\phi>0$,
and set $\phi_t(x)=t^{-d}\phi(\tfrac{x}{t})$. By the Carleson measure characterization
of $BMO$ (see Theorem 3, Section 4.3 of Stein~\cite{Ste93}), we have that (up
to a constant)
\begin{equation} \label{e:BMO}
  \|k\|_{BMO}^2
    = \sup_{x\in\R^d,R>0}\Big\{\frac{1}{R^d}\int_0^R\int_{B_R(x)}\frac1t  |\bigl((\nabla\phi)_t\star k\bigr)(y)|^2\,dy\,dt\Big\}\;.
\end{equation}
Note that this is an equivalent norm to the standard definition.
We could relax the conditions on $\phi$,
but we are going to use $\phi=g$ which satisfies 
the stronger condition $\widehat g>0$.
On the other hand the definition of $\B^0$ given above can be restated (up to
a constant) as
\[
  \|k\|_{\B^0}^2
    = \sup_{x\in\R^d,R>0}\Big\{\frac{1}{R^{d+2}}\int_0^R\int_{B_R(x)} t|\bigl((\nabla\phi)_t\star k\bigr)(y)|^2\,dy\,dt\Big\}\;,
\]
with $\phi = g$, and so $\|k\|_{\B^0} \leq c_3\|k\|_{BMO(\R^D)}$, as $t^2 \le R^2$.
\end{proof}
\section{Examples}\label{sec:exp}

In view of Theorem~\ref{t:koch} we wish to discuss for which initial conditions
it is possible to find $R$ such that the initial condition is small in the
$\B_R$ norm. To this aim define
\[
  \vmo = \{k:\R^d\to\R: \|k\|_{\B_R}\to0\text{ as }R\downarrow0\}.
\]
We later see in Theorem~\ref{t:koch} 
that functions in $\vmo$ correspond to initial conditions,
where it is possible to solve the equation~\eqref{e:sg} locally for 
a small time interval.  

The next lemma shows that the whole $L^\infty(\R^d)$, although it is contained in
$\B^0$, is not contained in $\vmo$. We will see later that 
this implies that our method of proof 
fails to provide local uniqueness 
of solutions for some initial 
conditions in $L^\infty(\R^d)$, although $L^\infty(\R^d)\subset \B_0$.
\begin{lemma}\label{l:vmo}
The following statements hold,
\begin{itemize}
\item there are functions in $L^\infty(\R^d)$ not belonging to $\vmo$,
\item if $k:\R^d\to\R$ is bounded and uniformly continuous, then $k\in\vmo$,
\item if $k:\R^d\to\R$ has bounded gradient on $\R^d$, then $k\in\vmo$.
\end{itemize}
\end{lemma}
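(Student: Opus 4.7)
The plan rests on rewriting $t^{1/4}\nabla(\e^{-tA}k)(x)$ in a form suited to pointwise estimates. Using $G(t,x)=t^{-d/4}g(xt^{-1/4})$ and the change of variable $z=(x-y)t^{-1/4}$,
\[
  t^{\frac14}\nabla(\e^{-tA}k)(x)
    = \int_{\R^d}(\nabla g)(z)\,k(x-zt^{\frac14})\,dz,
\]
and since $g\in\mathcal{S}(\R^d)$ gives $\int\nabla g=0$, I may replace $k(x-zt^{1/4})$ by $k(x-zt^{1/4})-k(x)$. The definition of $\|\cdot\|_{\B_R}$ then reduces the question ``$k\in\vmo$?'' to whether $t^{1/4}\|\nabla\e^{-tA}k\|_\infty\to 0$ as $t\downarrow 0$.

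For the third bullet, if $\nabla k$ is bounded then the mean-value bound $|k(x-zt^{1/4})-k(x)|\le\|\nabla k\|_\infty|z|t^{1/4}$ combined with $\int|z||\nabla g|\,dz<\infty$ (Schwartz decay) immediately yields $\|k\|_{\B_R}\le cR\|\nabla k\|_\infty$. For the second bullet, I would split the $z$-integral at a large radius $A$: the tail $|z|>A$ is controlled by $2\|k\|_\infty\int_{|z|>A}|\nabla g|$, which is arbitrarily small by Schwartz decay; for $|z|\le A$, uniform continuity of $k$ gives $|k(x-zt^{1/4})-k(x)|<\epsilon$ once $t^{1/4}A$ is smaller than the modulus of continuity at $\epsilon$, so this piece contributes at most $\epsilon\|\nabla g\|_{L^1}$. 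Choosing $A$ first and then letting $t\to 0$ (uniformly in $x$) finishes this bullet.

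The main obstacle is the first bullet, where I must exhibit an explicit bounded function with $\|k\|_{\B_R}\not\to 0$. I would take $k(x)=\operatorname{sgn}(x_1)$, exploiting its degree-$0$ homogeneity. Combined with the scaling $G(\lambda^4 t,\lambda x)=\lambda^{-d}G(t,x)$, a change of variables gives $(\e^{-\lambda^4 tA}k)(\lambda x)=(\e^{-tA}k)(x)$, whence $\nabla(\e^{-tA}k)(0)=t^{-1/4}\nabla(\e^{-A}k)(0)$, so $t^{1/4}|\nabla(\e^{-tA}k)(0)|$ is constant in $t$. It remains to verify that this constant is nonzero: from the identity above with $t=1$, $x=0$, its first component equals $-\int\partial_1 g(z)\operatorname{sgn}(z_1)\,dz$, and integration by parts in $z_1$ (using $\partial_{z_1}\operatorname{sgn}(z_1)=2\delta$) turns this into $2\int_{\R^{d-1}}g(0,z')\,dz'$. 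This last quantity equals $(2\pi)^{-1}\int_{\R}\e^{-\xi_1^4}\,d\xi_1>0$ by Fourier inversion applied to the projected density $f(z_1)=\int g(z_1,z')\,dz'$ whose Fourier transform is $\widehat g(\xi_1,0)=\e^{-\xi_1^4}$; hence $k\notin\vmo$.
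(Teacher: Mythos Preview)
Your proof is correct and, for the second and third bullets, proceeds exactly as the paper does: the same subtraction of $k(x)$ via $\int\nabla g=0$, the same near/far splitting of the $z$-integral for uniformly continuous $k$, and the same $O(t^{1/4})$ bound when $\nabla k\in L^\infty$ (the paper moves the gradient onto $k$ and integrates $g$ directly, you use the mean-value inequality and integrate $|z|\,|\nabla g|$; both give the same order).

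For the first bullet the paper uses a different explicit counterexample: in $d=1$ it takes $k=\uno_{[-1,1]}$, computes $t^{1/4}(\e^{-tA}k)_x(x)=g\bigl(\tfrac{x+1}{t^{1/4}}\bigr)-g\bigl(\tfrac{x-1}{t^{1/4}}\bigr)$, and evaluates at $x=-1$ to see the limit $g(0)>0$. Your choice $k=\operatorname{sgn}(x_1)$ is equally valid and arguably cleaner: degree-zero homogeneity makes the $t$-independence of $t^{1/4}\nabla(\e^{-tA}k)(0)$ automatic, and the nonvanishing reduces to $\int_{\R^{d-1}}g(0,z')\,dz'>0$ via one integration by parts and Fourier inversion. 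The paper's version is entirely elementary with no distributional step; yours works directly in any dimension without reducing to $d=1$.
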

\begin{proof}
We prove the first statement. Since the Green's function tensorises, it is
enough to find a counterexample in dimension $d=1$. Let $k(x)=\uno_{[-1,1]}(x)$,
then
\[
\begin{aligned}
  t^{\frac14}|(\e^{-tA}k)_x(x)|
    & = \Bigl|\frac{1}{t^\frac14}\int_\R g'(\tfrac{x-y}{t^{1/4}})k(y)\,dy\Bigr|
      = |g(\tfrac{x+1}{t^{1/4}}) - g(\tfrac{x-1}{t^{1/4}})|,
\end{aligned}
\]
hence (choosing $x=-1$ and $t=R$), since $g(x)\to0$ as $x\to\infty$,
\[
  \sup_{t\leq R, x\in\R}\left\{ t^{\frac14}|(\e^{-tA}k)_x(x)|\right\}
    \geq |g(0) - g(\tfrac{-2}{R^{1/4}})|
    \longrightarrow g(0)>0.
\]
Assume now that $k$ is bounded and uniformly continuous and fix $\epsilon>0$.
By uniform continuity there is $\delta>0$ such that $|k(x)-k(y)|\leq\epsilon$
for all $x,y\in\R^d$ with $|x-y|\leq\delta$. Since the integral of $\nabla g$
is zero,
\[
\begin{aligned}
  t^\frac14|\nabla(\e^{tA}k)(x)|
    & = \Bigl|\int_{\R^d} \nabla g(z)\bigl(k(x-zt^{\frac14}) - k(x)\bigr)\,dz\Bigr|\\
    & = \Bigl|\int_ {t^{\frac14}|z|\geq\delta} \nabla g(z) \bigl(k(x-zt^{\frac14}) - k(x)\bigr)\,dz\Bigr|\\
    &\hspace*{3cm} + \Bigl|\int_ {t^{\frac14}|z|\leq\delta}\nabla g(z)\bigl(k(x-zt^{\frac14}) - k(x)\bigr)\,dz\Bigr|\\
    &\leq 2\|k\|_\infty\int_{t^{\frac14}|z|\geq\delta} |\nabla g(z)|\,dz
        + \epsilon \|\nabla g\|_{L^1(\R^d)},
\end{aligned}
\]
hence
$\limsup_{R\to 0} \|k\|_{\B_R}\leq\epsilon\|\nabla g\|_{L^1}$ and as
$\epsilon\downarrow0$, the claim follows.

Finally, let $k:\R^d\to\R$ be such that $\|\nabla k\|_{L^\infty}<\infty$, then
\[
  t^\frac14|\nabla(\e^{tA}k)(x)|
    = t^{-\frac{d-1}{4}} \Bigl|\int_{\R^d} g(\tfrac{x-y}{t^{1/4}}) \nabla k(y)\,dy\Bigr|
    \leq t^\frac14 \|g\|_{L^1} \|\nabla k\|_{L^\infty},
\]
and hence $\|k\|_{\B_R} = R^\frac14 \|g\|_{L^1} \|\nabla k\|_{L^\infty}$.
\end{proof}
\begin{example}
For $d=1$ consider $k(x) = \log|x|$. This function has paramount
importance since it is a stationary solution for problem~\eqref{e:sg}
(see~\cite{BloRom09}). It is an interesting fact that $k$ is neither 
a weak nor a mild solution (for instance due to Theorem~\ref{t:smooth} which
ensures smoothness of solutions according to Definition~\ref{def:mild}). Here
we will show that $k\in\B$ but $k\not\in\vmo$.
\end{example}
Indeed, consider first,
\[
\begin{aligned}
  t^{\frac14}\bigl|(\e^{-tA} k)_x(x)\bigr| 
    & = \Bigl|\int_\R g'(z) \log|x- t^{\frac14}z|\,dz \Bigr|\\
     & = \Bigl|\int_\R g'(z) (\log(t^{\frac14}) + \log| t^{-\frac14}x- z|)\,dz \Bigr|  \\
    & = \Bigl|\int_\R g'(z) \log| t^{-\frac14}x- z|\,dz \Bigr| \;,
\end{aligned}
\]
where we used that the integral over $g'$ is zero. Now substitute
$\tilde{x}=t^{-\frac14}x$ to obtain
\[
  \sup_{x\in\R}  \Big\{t^{\frac14}|(\partial_x\e^{-tA} k) (x)| \Big\}
    = \sup_{x\in\R} \Big\{ \Bigl| \int_\R g'(z) \log|x- z|\,dz \Bigr| \Big\}
    = \|\partial_x\e^{-A}k\|_\infty \;.
\]
So it is easy to see that $\|k\|_{\B}$ is finite,
but $\|k\|_{\B_R}$ is independent of $R$ and does not converge to $0$.

\begin{example}
Consider $d=1$ and, for $\alpha>0$, $k_\alpha(x)=|x|^\alpha$.
Then $k_\alpha\not\in\B$, but $\|k_\alpha\|_{\B_R}\to0$. So $\vmo$ may
contain certain unbounded functions, which are not in $\B$. 
\end{example}
As in the previous example,
\[
  t^{\frac14}|(\partial_x\e^{-tA} k_\alpha)(x)| 
    = \Bigl|\int_\R g'(z) |x- t^{\frac14}z|^\alpha\,dz \Bigr|
    =  t^{\frac\alpha4} \Big| \int_\R g'(z) | t^{-\frac14}x- z|^\alpha\,dz \Big|
\]
Thus $\|t^{\frac14}\partial_x(\e^{-tA} k_\alpha)\|_\infty = t^{\frac\alpha4} \|\partial_x(\e^{-A}k_\alpha)\|_\infty$,
hence $\|k_\alpha\|_{\B_R}\to 0$ for $R\to0$ but $\|k_\alpha \|_{\B}=\infty$.

Next lemma, together with the main Theorem~\ref{t:koch}, shows that
problem~\eqref{e:sg} has locally  a unique solution 
for any $\dot H^{d/2}(\R^d)$ initial conditions. This
recovers and extends a result proved in dimension $d=1$ in~\cite{BloRom09}.
\begin{lemma}
The homogeneous space $\dot H^{d/2}(\R^d)$ is contained in $\vmo$, where
\[
  \dot H^{d/2}(\R^d)
    = \Bigl\{k:\R^d\to\R: \|k\|_{\dot H^{d/2}}:=\int_{\R^d}|\xi|^d|\widehat k(\xi)|^2\,d\xi<\infty\Bigr\}
\]
and $\widehat k$ denotes Fourier transform of $k$.
\end{lemma}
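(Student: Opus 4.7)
The plan is to work on the Fourier side and show the required convergence $\|k\|_{\B_R}\to0$ by a frequency truncation argument, using the $\dot H^{d/2}$ norm to control the high frequencies and the cutoff scale $M$ to handle the low frequencies.

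First I would write, using $\widehat{\e^{-tA}k}(\xi)=\e^{-t|\xi|^4}\widehat k(\xi)$ and Fourier inversion,
\[
  |\nabla(\e^{-tA}k)(x)|
    \leq c\int_{\R^d}|\xi|\,\e^{-t|\xi|^4}|\widehat k(\xi)|\,d\xi,
\]
and split the integral at a scale $M>0$ to be chosen. For the low-frequency part $|\xi|\leq M$, I would drop the exponential and apply Cauchy--Schwarz against the weight $|\xi|^{d/2}|\widehat k|$:
\[
  \int_{|\xi|\leq M}|\xi|\,|\widehat k(\xi)|\,d\xi
    \leq \Bigl(\int_{|\xi|\leq M}|\xi|^{2-d}\,d\xi\Bigr)^{1/2}\|k\|_{\dot H^{d/2}}
    \leq cM\,\|k\|_{\dot H^{d/2}},
\]
where the remaining integral is computed in polar coordinates to give $\int_0^M r\,dr = M^2/2$ (this is the place where the restriction $d\leq 3$ is used, ensuring integrability of $|\xi|^{2-d}$ near the origin).

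For the high-frequency part $|\xi|>M$, I would again use Cauchy--Schwarz with the same weight,
\[
  \int_{|\xi|>M}|\xi|\,\e^{-t|\xi|^4}|\widehat k(\xi)|\,d\xi
    \leq \Bigl(\int_{\R^d}|\xi|^{2-d}\e^{-2t|\xi|^4}\,d\xi\Bigr)^{1/2}
         \Bigl(\int_{|\xi|>M}|\xi|^d|\widehat k|^2\,d\xi\Bigr)^{1/2}.
\]
A rescaling $\eta=t^{1/4}\xi$ shows the first factor equals $c\,t^{-1/4}$, using once more that $d\leq3$. Hence $t^{1/4}$ times this high-frequency contribution is bounded by $c\bigl(\int_{|\xi|>M}|\xi|^d|\widehat k|^2\,d\xi\bigr)^{1/2}$, independently of $t$.

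Combining the two estimates and multiplying through by $t^{1/4}$ with $t\leq R^4$, I would obtain
\[
  \|k\|_{\B_R}
    \leq c\,R\,M\,\|k\|_{\dot H^{d/2}}
       + c\Bigl(\int_{|\xi|>M}|\xi|^d|\widehat k(\xi)|^2\,d\xi\Bigr)^{1/2}.
\]
Given $\eps>0$, first I would pick $M$ so large that the tail of the $\dot H^{d/2}$ integral is less than $\eps^2$, and then take $R\to 0$ to kill the first term, concluding $\limsup_{R\downarrow 0}\|k\|_{\B_R}\leq c\eps$. Since $\eps$ is arbitrary, $k\in\vmo$. I expect the only delicate point is the verification that $|\xi|^{2-d}$ is locally integrable at the origin, which is exactly why the hypothesis $d\leq 3$ is needed, in parallel with Proposition~\ref{p:icok}; the rest is bookkeeping with scaling and Cauchy--Schwarz.
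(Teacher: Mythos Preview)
Your proof is correct and follows essentially the same route as the paper's: pass to the Fourier side, split at a frequency scale, apply Cauchy--Schwarz on each piece, and take the two-step limit $R\downarrow0$ then $M\uparrow\infty$. One small correction to your parenthetical: the restriction $d\leq 3$ is not actually used in this argument, since in polar coordinates both $\int_{|\xi|\leq M}|\xi|^{2-d}\,d\xi$ and $\int_{\R^d}|\xi|^{2-d}\e^{-2t|\xi|^4}\,d\xi$ reduce to integrals of the form $c\int r\,dr$ which are finite in every dimension; the dimension restriction in the paper enters only through Proposition~\ref{p:icok}.
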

\begin{proof}
If $t>0$ and $x\in\R^d$, by using the properties of Fourier transform and
convolution,
\[
\begin{aligned}
  t^\frac14|\nabla(\e^{tA}k)(x)|
    & = \frac{1}{t^\frac{d}4}\Bigl|\int_{\R^d}(\nabla g)\bigl((x-y)t^{-\frac14}\bigr)k(y)\,dy\Bigr|\\
    & = \frac{1}{t^\frac{d}4}\Bigl|\int_{\R^d}\int_{\R^d}(\nabla g)\bigl((x-y)t^{-\frac14}\bigr)\widehat k(\xi)\e^{\im\xi\cdot y}\,dy\,d\xi\Bigr|\\
    & = \Bigl|\int_{\R^d}\int_{\R^d} \nabla g(y)\e^{-\im\xi z t^\frac14} \widehat k(\xi)\e^{\im\xi\cdot x}\,dy\,d\xi\Bigr|\\
    &\leq t^\frac14 \int_{\R^d} |\xi|\,|\widehat k(\xi)|\e^{-|\xi|^4t}\,d\xi.
\end{aligned}
\]
Given $a>0$, split the integral in the last line of formula above in two pieces
\term{l} and \term{h}, corresponding to the domains of integration
$\{|\xi|\leq a\}$ and $\{|\xi|> a\}$ respectively. We estimate \term{l}
using the Cauchy--Schwarz inequality,
\[
\term{l}
  \leq t^\frac14 \int_{|\xi|\leq a} |\xi|^{1-\frac{d}2}\bigl(|\xi|^\frac{d}2|\widehat k(\xi)|\bigr)\,d\xi
  \leq t^\frac14 \|k\|_{\dot H^{d/2}}\Bigl(\int_{|\xi|\leq a} |\xi|^{2-d}\,d\xi\Bigr)^\frac12
  \leq c a t^\frac14 \|k\|_{\dot H^{d/2}},
\]
while by a change of variables and Cauchy--Schwarz' inequality again,
\[
\term{h}
  \leq t^\frac14 \Bigl(\int_{|\xi|\geq a} |\xi|^d|\widehat k(\xi)|^2\,d\xi\Bigr)^\frac12
                 \Bigl(\int_{\R^d} |\xi|^{2-d}\e^{-2t|\xi|^4}\,d\xi\Bigr)^\frac12
  \leq c \Bigl(\int_{|\xi|\geq a} |\xi|^d|\widehat k(\xi)|^2\,d\xi\Bigr)^\frac12.
\]
In conclusion
\[
  \|k\|_{\B_R}
    \leq c a R \|k\|_{\dot H^{d/2}}
       + c \Bigl(\int_{|\xi|\geq a} |\xi|^d|\widehat k(\xi)|^2\,d\xi\Bigr)^\frac12,
\]
so we see that $\limsup_{R\to0}\|k\|_{\B_R}$ is bounded by a quantity which
converges to $0$ as $a\uparrow\infty$.
\end{proof}
\section{The fixed point argument}\label{sec:FPA}

Define the map
\[
  \mapV(h,k)(t)
    = \int_0^t \Delta\bigl(\e^{-(t-s)A}\nabla h(s)\cdot\nabla k(s)\bigr)\,ds
\]
and set
\begin{equation}\label{e:map}
\map(h)(t) = \e^{-tA} h_0 - \mapV(h,h)(t).
\end{equation}
We will use the following concept of a mild solution,
which is given as a solution of the variation of constants formula in \eqref{e:mild}.
\begin{definition}
\label{def:mild}
We say that $h\in\X$ solves~\eqref{e:sg} with initial condition $h_0\in\B$, 
if for all $t>0$
\begin{equation}\label{e:mild}
h(t)=\e^{-tA} h_0 - \mapV(h,h)(t)\;.
\end{equation}
We call  $h\in \X_R$ a local solution, if \eqref{e:mild} holds only for $t\in [0,R^4]$.
\end{definition}
The following Lemma is crucial for the proof of uniqueness and existence.
It verifies that the nonlinear part is locally Lipschitz.
\begin{lemma}\label{l:map}
The map $\mapV$ is bi-linear continuous from $\X\times\X$ to $\X$ and from
$\X_R\times\X_R$ to $\X_R$, for all $R>0$.
\end{lemma}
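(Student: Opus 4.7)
The plan is to reduce the statement to a single convolution estimate for the biharmonic semigroup kernel, combined with an elementary Beta integral. Bilinearity is evident from the definition of $\mapV$, so I only need to control $\|\mapV(h,k)\|_\X$ in terms of $\|h\|_\X\|k\|_\X$.

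\textbf{Step 1: kernel estimate for $\nabla\Delta \e^{-\tau A}$.} Since $\e^{-\tau A}$ acts by convolution with the Schwartz function $G(\tau,\cdot) = \tau^{-d/4} g(\cdot \tau^{-1/4})$, the kernel of $\nabla\Delta \e^{-\tau A}$ is $\tau^{-(d+3)/4}(\nabla\Delta g)(\cdot\tau^{-1/4})$. Because $\nabla\Delta g$ is in $L^1(\R^d)$, I obtain the operator-norm bound
\[
  \|\nabla\Delta\,\e^{-\tau A}\phi\|_\infty \leq c\,\tau^{-3/4}\|\phi\|_\infty
\]
for every $\phi\in L^\infty(\R^d)$. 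This is the only use made of the concrete form of $G$.

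\textbf{Step 2: pointwise control of the nonlinearity.} From the definition of the $\X$-norm, at each time $s>0$ one has $\|\nabla h(s)\|_\infty\leq s^{-1/4}\|h\|_\X$ and similarly for $k$, so $\nabla h(s)\cdot\nabla k(s)\in L^\infty$ with
\[
  \|\nabla h(s)\cdot\nabla k(s)\|_\infty \leq s^{-1/2}\|h\|_\X\|k\|_\X.
\]
Combining with Step 1 gives, after writing $\nabla\mapV(h,k)(t)=\int_0^t \nabla\Delta\,\e^{-(t-s)A}(\nabla h(s)\cdot\nabla k(s))\,ds$,
\[
  \|\nabla\mapV(h,k)(t)\|_\infty \leq c\,\|h\|_\X\|k\|_\X\int_0^t (t-s)^{-3/4}s^{-1/2}\,ds.
\]

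\textbf{Step 3: Beta integral.} The integral equals $t^{-1/4}B(\tfrac14,\tfrac12)$, a finite constant independent of $t$. Multiplying by $t^{1/4}$ and taking the supremum over $t>0$ yields $\|\mapV(h,k)\|_\X\leq c\,\|h\|_\X\|k\|_\X$, proving continuity on $\X\times\X$. For the local version, every step above is pointwise in $t$, so restricting the supremum to $t\leq R^4$ (and noticing that the integrand in $s$ is only integrated up to $t\leq R^4$) gives the same bound, with the same constant, for $\mapV:\X_R\times\X_R\to\X_R$.

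The only genuinely non-routine point is Step 1, the heat-kernel bound for the third-order operator $\nabla\Delta\,\e^{-\tau A}$; once the biharmonic semigroup is known to have a Schwartz-class kernel with the correct self-similar scaling, everything else is a one-line Beta-function computation. Integrability at both endpoints of the $s$-integral ($-3/4>-1$ and $-1/2>-1$) is exactly what makes the fixed-point iteration close, and it is this double integrability that singles out the scaling exponent $1/4$ used in the definition of $\|\cdot\|_\X$.
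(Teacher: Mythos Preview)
Your proof is correct and follows essentially the same route as the paper: both use the self-similar scaling of the biharmonic kernel to get the $L^\infty\to L^\infty$ bound $\|\nabla\Delta\,\e^{-\tau A}\|\leq c\,\tau^{-3/4}$, combine it with the pointwise control $\|\nabla h(s)\cdot\nabla k(s)\|_\infty\leq s^{-1/2}\|h\|_\X\|k\|_\X$, and finish with the Beta integral $\int_0^t(t-s)^{-3/4}s^{-1/2}\,ds=t^{-1/4}B(\tfrac14,\tfrac12)$. The only cosmetic difference is that the paper carries out the kernel estimate inline rather than isolating it as a separate step, and records the constant as $\|g\|_{W^{3,1}}B(\tfrac12,\tfrac14)$.
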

\begin{proof}
The bilinearity is obvious. For the boundedness 
let $x\in\R^d$ and $t>0$, then
\[
\begin{aligned}
  |\nabla\mapV(h,k)(t,x)|
    & = \Bigl|\int_0^t\int_{\R^d}\nabla\Delta G(t-s,x-y) \nabla h(s,y) \nabla k(s,y)\,dy\,ds\Bigr|\\
    & = \Bigl|\int_0^t\frac1{(t-s)^{\frac{d+3}{4}}}\int_{\R^d} (\nabla\Delta g)\Bigl(\frac{x-y}{(t-s)^{1/4}}\Bigr) \nabla h(s,y) \nabla k(s,y)\,dy\,ds\Bigr|\\
    &\leq \|h\|_\X \|k\|_\X \int_0^t\frac1{(t-s)^{\frac{d+3}{4}}\sqrt{s}}\int_{\R^d} \Bigl|(\nabla\Delta g)\Bigl(\frac{x-y}{(t-s)^{1/4}}\Bigr)\Bigr|\,dy\,ds\\
    &\leq \|h\|_\X \|k\|_\X \|g\|_{W^{3,1}(\R^d)} \int_0^t \frac1{(t-s)^{3/4}\sqrt{s}}\,ds\\
    &\leq t^{-1/4}  B(\tfrac12,\tfrac14) \|h\|_\X \|k\|_\X \|g\|_{W^{3,1}(\R^d)}\\
    & =   c_4 t^{-1/4}\|h\|_\X \|k\|_\X,
\end{aligned}
\]
where $B$ is the Beta function. The corresponding inequality for the local
space $\X_R$ proceeds similarly.
\end{proof}
Using the previous Lemma, we can now state and prove our main result.
The first part states global existence of unique solutions,
while the second part is about local existence of solutions.
\begin{theorem}\label{t:koch}
There is $\delta>0$ such that if $\|h_0\|_{\B^0}\leq\delta$, then there exists 
a unique (global) solution in $\X$ of \eqref{e:sg} with initial condition $h_0$.

Moreover, if $\|h_0\|_{\B^0_R}\leq\delta$, then there is a unique local solution
in $\X_R$ of \eqref{e:sg} on $[0,R^4]$  with initial condition $h_0$.

Finally, if $h_0$ is periodic and small in $\B^0_R$ for some $R>0$ (or it
is small in $\B^0$), then the solution is also periodic.
\end{theorem}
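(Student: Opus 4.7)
The plan is to apply Banach's fixed point theorem to $\map$ as defined in \eqref{e:map}, viewed as a self-map of a small closed ball in $\X$ (or, for the local statement, in $\X_R$). Two ingredients feed the argument. For the linear part, the very definition of $\|\cdot\|_\B$ together with Proposition \ref{p:icok} gives $\|\e^{-tA}h_0\|_\X = \|h_0\|_\B \leq c_1^{-1}\|h_0\|_{\B^0}$, with the analogous bound holding in $\X_R$ and $\B^0_R$. For the nonlinear part, Lemma \ref{l:map} supplies the bilinear estimate $\|\mapV(h,k)\|_\X \leq c_4 \|h\|_\X \|k\|_\X$, again also in $\X_R$.

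Concretely, I would fix a radius $M>0$ (to be chosen) and consider the closed ball $B_M = \{h\in\X : \|h\|_\X \leq M\}$. For $h,k \in B_M$ and $\|h_0\|_{\B^0}\leq\delta$, the two ingredients combine to give $\|\map(h)\|_\X \leq c_1^{-1}\delta + c_4 M^2$, and, exploiting bilinearity through the identity $\mapV(h,h) - \mapV(k,k) = \mapV(h, h-k) + \mapV(h-k, k)$, also $\|\map(h) - \map(k)\|_\X \leq 2c_4 M \|h-k\|_\X$. Selecting $M = 1/(4c_4)$ and then $\delta = c_1/(8c_4)$ makes $\map$ a strict contraction on $B_M$; Banach's theorem yields a unique fixed point, which is the desired mild solution in the sense of Definition \ref{def:mild}. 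Uniqueness in $\X$ within the small-norm regime follows from the same bilinear identity applied to the difference of two putative solutions. The local statement on $[0,R^4]$ is proved verbatim with $\X$, $\B^0$ replaced by $\X_R$, $\B^0_R$ throughout, using the corresponding parts of Proposition \ref{p:icok} and Lemma \ref{l:map}.

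For the periodicity claim, let $v\in\R^d$ be any period of $h_0$ and denote by $\tau_v$ spatial translation by $v$. Since the equation, the integral defining $\mapV$, and each of the norms $\|\cdot\|_\X$, $\|\cdot\|_\B$, $\|\cdot\|_{\B^0}$ (and their local counterparts) are translation-invariant, $\tau_v h$ is a fixed point of $\map$ with initial datum $\tau_v h_0 = h_0$, lying in the same ball $B_M$. Uniqueness forces $\tau_v h = h$, so $h(t,\cdot)$ is $v$-periodic for every $t>0$. The only mildly subtle point in the whole scheme is the passage from the $\B^0$-smallness hypothesis on $h_0$ to the $\B$-control on $\e^{-tA}h_0$ needed to fit inside $B_M$; this is exactly the content of Proposition \ref{p:icok}, and explains the standing restriction $d\leq 3$.
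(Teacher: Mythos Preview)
Your argument is correct and is essentially the same as the paper's: both rest on Proposition~\ref{p:icok} for the linear term and Lemma~\ref{l:map} for the bilinear term, and both conclude periodicity from translation invariance plus uniqueness. The only cosmetic difference is that the paper writes out the Picard iterates $H_{n+1}=\map(H_n)$ explicitly and shows the sequence is Cauchy, whereas you invoke Banach's contraction principle directly on a ball in $\X$; your explicit choice $M=1/(4c_4)$, $\delta=c_1/(8c_4)$ is a clean alternative to the paper's quadratic-formula constants.
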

In particular, $\|h_0\|_{\B^0_R}\leq\delta$ is true for a suitable value
of $R$ for all $h_0\in\vmo$.
\begin{proof}
We prove the first statement by a fixed point iteration argument. Let $c_4$ be the
constant defined in the proof of Lemma~\ref{l:map} and choose $\delta>0$,
$K>0$ such that
\[
  1-\frac{4 c_4\delta}{c_1}>0,
    \qquad
  \frac{1}{2c_4}\Bigl(1-\sqrt{1-\frac{4 c_4\delta}{c_1}}\Bigr)\leq K<\frac{1}{2c_4}.
\]
Define
\begin{equation}\label{e:iter}
  H_0 = 0,
    \qquad
  H_{n+1} = \map(H_n) = \e^{-tA}h_0 - \mapV(H_n,H_n),
\end{equation}
then $\|H_1\|_\X\leq\tfrac{\delta}{c_1}$ and it is easy to check by induction
(and by the choice of $\delta$ and $K$) that $\|H_n\|_\X\leq K$ for all $n$.
Then
\[
  \|H_{n+1}-H_n\|_\X
    =    \|\mapV(H_n,H_n) - \mapV(H_{n-1},H_{n-1})\|_\X
    \leq 2c_4 K \|H_n-H_{n-1}\|_\X
\]
and so $(H_n)_{n\in\N}$ is convergent in $\X$ to a fixed point of $\map$.

The same proof works for local spaces, since both constants $c_1$ and $c_4$ do
not depend on $R$. Finally, if $h_0$ is periodic, the statement follows by
translation invariance and uniqueness.
\end{proof}
\begin{remark}[Forward self--similar solutions]
The theorem above allows to show the existence of self--similar solutions,
namely solutions invariant for the scaling~\eqref{e:scaling}. Indeed, assume
to have $h_0\in\B^0$ (or in a local space) such that $h_0(\lambda x) = h_0(x)$
for all $\lambda>0$, then it is easy to verify that $H_1$ is invariant for
the scaling~\eqref{e:scaling} and that $\mapV(h,h)$ is also invariant if so
is $h$. In conclusion the whole sequence $(H_n)_{n\in\N}$ defined in~\eqref{e:iter}
is invariant, as well as its limit.

Given a (forward) self--similar solution $h$, one can write $h(t,x) = \psi(x/t^{1/4})$,
where $\psi(x) = h(1,x)$ solves the equation
\[
\Delta^2\psi + \Delta|\nabla\psi|^2 - \frac14 x\cdot\nabla\psi = 0.
\]
The simplest case corresponds to $d=1$, where the only admissible initial
conditions are all functions $h_0$ constants on $(-\infty,0)$ and on $(0,\infty)$
(possibly with different values on the two half--lines), with
$\|h_0\|_{\B_R} = |h_0(1)-h_0(-1)|\,\|g\|_{L^\infty}$.

Backward self--similar solutions might provide examples of solutions with
blow--up. Due to the scaling of the problem, the quantity blowing up is
related to the derivative of the solution. We do not know if backward
self--similar solutions exist (notice that backward self--similar solutions
do not exist for the Navier--Stokes equations, see~\cite{NecRuzSve96}).
\end{remark}
\section{The stochastic problem}\label{sec:SPDE}

In this section we give a short outline of the proof 
of local existence for the stochastic PDE, 
without many details on probability theory.
For details we refer to \cite{DPZa:92,Ch:07,Liu:06}.
Consider  
\begin{equation}\label{e:spde}
\partial_t h + \Delta^2 h + \Delta|\nabla h|^2 = \partial_t W\;,
\end{equation}
where $ \partial_t W$ is the generalized derivative of a 
Hilbert-space value Wiener process.
Define the corresponding Ornstein-Uhlenbeck process 
for $t>0$ as the following It\^o-integral
\begin{equation}\label{e:mildspde}
Z(t)=\int_0^t e^{-(t-s)A}dW\;.
\end{equation}
Note that $Z$ solves $\partial_t Z + \Delta^2 Z= \partial_t W$ with $Z(0)=0$.
The  mild solution  of \eqref{e:spde} 
is analogous to Definition \ref{def:mild} given by a solution of
$$ h(t) = \e^{-tA} h_0 - \mapV(h,h)(t) + Z(t)
$$
Now the main problem in the stochastic setting is to determine the regularity
of $Z$. Once we know this, we can solve the equation using Banach's fixed
point argument, as in Theorem~\ref{t:koch}. 
Moreover $v=h-Z$ solves the following random PDE
\begin{equation}
\label{e:RPDE}
 \partial_t v+ \Delta^2 v + \Delta|\nabla v|^2 
= - \Delta|\nabla Z|^2 - 2\Delta|\nabla v\cdot\nabla Z|^2,
\qquad v(0)=h_0,
\end{equation}
which only contains lower order terms that do not change the proofs, once
$Z$ is sufficiently regular.

In the case of bounded intervals (i.~e.~$d=1$) with periodic boundary
conditions and space--time white noise the stochastic convolution $Z$
and its derivative $\partial_x Z$ are continuous in both space and time,
which can be verified using the methods in \cite{DPZa:92}. 
See for example \cite{DPDe:96}.
This implies that almost surely $\|Z\|_{\X_R}\to 0$ for $R\to 0$,
and we can solve the stochastic PDE \eqref{e:mildspde} (or the random PDE \eqref{e:RPDE}) 
uniquely in $\X_R$, for some small (random) $R>0$
if the initial condition $h_0$ is such that the PDE~\eqref{e:sg} 
has a unique local solution.

An interesting question appears in the case of periodic 
boundary conditions and $d=2$,
as for space-time white noise the convolution $Z$ 
just fails to be differentiable in space.
Nevertheless, $Z$ will be differentiable,
if we consider slightly more regular noise.

For stochastic PDEs on unbounded domains one can use the formulation of
Walsh~\cite{Wal86}, although one has to consider that for space--time white
noise the stochastic convolution $Z(t,x)$ is unbounded for $|x|\to\infty.$ 
\section{Smoothness of solutions}\label{sec:smooth}

Following the same methods of \cite{GerPavSta07}, we show that solutions in
$\X$ (or $\X_R$) are smooth. Define for $m\geq1$,
\[
  \|k\|_{\X,m}
    := \sup_{t>0} \Big\{t^{\frac{m+1}{4}}\!\!\sum_{|\alpha|=m+1}\!\! \|D^\alpha k\|_\infty\Big\}
\]
and denote by $\|\cdot\|_{\X_R,m}$ the corresponding local version, where
for $\alpha=(\alpha_1,\dots,\alpha_d)$ we used
$D^\alpha = \partial_{x_1}^{\alpha_1}\dots\partial_{x_d}^{\alpha_d}$ and
$|\alpha|=\alpha_1+\dots+\alpha_d$.

Let $\X^m$ be the space
\[
  \X^m
    = \{k:\R^d\to\R : \|k\|_{\X^m} := \max_{0\leq j\leq m} \|k\|_{\X,j}<\infty\},
\]
and denote by $\X_R^m$ the corresponding local version. For simplicity of
notations we understand that $\|\cdot\|_{\X,0} = \|\cdot\|_\X$
and for $R=\infty$ that $\X_\infty^m=\X^m$. 
The main theorem of this section is the following
result on smoothness in space. Smoothness in time then follows 
from the PDE by a standard bootstrapping argument. 
\begin{theorem}\label{t:smooth}
Let $h$ be a solution of~\eqref{e:sg} in $\X_R$, with $0<R\leq\infty$.
Then $h(t)\in C_b^\infty(\R^d)$ for all $t\in(0,R)$.
\end{theorem}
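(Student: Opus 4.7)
The plan is to prove by induction on $m\ge 0$ that $h\in\X_R^m$, i.e.\ that
$N_{m+1}(t):=\sup_{0<s\le t}s^{(m+1)/4}\sum_{|\alpha|=m+1}\|D^\alpha h(s)\|_\infty$
is finite for every $t\le R^4$; the base case $m=0$ is the hypothesis
$h\in\X_R$. Once $h(t)\in C^\infty_b(\R^d)$ is established for every $t>0$,
joint smoothness in space-time follows by differentiating
$\partial_t h=-\Delta^2h-\Delta|\nabla h|^2$ and inducting on the order of the
time derivative, as the text announces.

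For the inductive step I differentiate the mild formula~\eqref{e:mild} with
$|\alpha|=m+2$. The linear piece is disposed of by the semigroup identity
$e^{-tA}=e^{-(t/2)A}\circ e^{-(t/2)A}$: one factor absorbs the first
derivative against the $\B_R$-seminorm, the other carries the remaining
$m+1$ derivatives through the scaling of the Schwartz function $g$, giving
$t^{(m+2)/4}\|D^\alpha e^{-tA}h_0\|_\infty\le C\|h_0\|_{\B_R}$. The nonlinear
integral is split as $\int_0^t=\int_0^{t/2}+\int_{t/2}^t$. On $[0,t/2]$ all
$m+2$ derivatives are moved onto the kernel: since $t-s\ge t/2$ the $L^1$
bound $\|D^\alpha\Delta G(t-s,\cdot)\|_{L^1}\le Ct^{-(m+4)/4}$ is harmless,
and paired with $\|\nabla h(s)\|_\infty^2\le\|h\|_{\X_R}^2 s^{-1/2}$ it
produces a contribution of order $\|h\|_{\X_R}^2 t^{-(m+2)/4}$. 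On $[t/2,t]$
the kernel is singular at $s=t$ and integrability in $L^1$ permits at most one
derivative to sit on it; the remaining $m+1$ derivatives are distributed onto
$\nabla h\cdot\nabla h$ by Leibniz. Interior terms, in which both factors of
$h$ carry between $2$ and $m+1$ derivatives, are controlled by
$N_2,\dots,N_{m+1}$ (finite by induction) and yield a uniform
$C_{\mathrm{ind}}\,s^{-(m+3)/4}$; only the two extremal terms
$\nabla h\cdot D^{m+2}h$ involve $N_{m+2}$ itself, and after integration
against $(t-s)^{-3/4}$ they produce a feedback of the form
$C_0\|h\|_{\X_R}\,N_{m+2}(t)$. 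Collecting everything,
\[
N_{m+2}(t)\le C_m\bigl(\|h_0\|_{\B_R},N_1,\dots,N_{m+1}\bigr)+C_0\,\|h\|_{\X_R}\,N_{m+2}(t).
\]

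The hard part is precisely this closure: because $L^1$ parabolic smoothing
cannot absorb more than one derivative on the near-diagonal integral, the
top-order factor $D^{m+2}h$ in the Leibniz expansion is unavoidable and must
be controlled by smallness rather than by an additional gain of regularity.
Theorem~\ref{t:koch} already gives $\|h\|_{\X_R}\le K<1/(2c_4)$, and choosing
the threshold $\delta$ in the statement of that theorem small enough secures
$C_0\|h\|_{\X_R}<1$, closing the bootstrap at each fixed $m$. If the
constant $C_0$ grows with $m$ faster than the prescribed smallness can
compensate, the argument can be refined in a parabolic H\"older scale, using
$\|D^\beta\Delta e^{-(t-s)A}f\|_{C^\gamma}\le C(t-s)^{-(|\beta|+2+\gamma)/4}\|f\|_\infty$
to gain a fractional derivative at each iteration and reach any prescribed
integer smoothness after finitely many steps.
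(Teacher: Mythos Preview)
Your bootstrap scheme is essentially the right calculation (it is what drives Lemma~\ref{l:nonlinhigh}), but as a proof of Theorem~\ref{t:smooth} it has a genuine gap at the closure step. The hypothesis of the theorem is merely $h\in\X_R$; there is no smallness of $\|h\|_{\X_R}$ available, and you cannot ``choose the threshold $\delta$ in Theorem~\ref{t:koch} small enough'' after the fact, since $h$ is handed to you rather than produced by that theorem. Thus the absorption $C_0\|h\|_{\X_R}<1$ is simply not justified. The paper closes this gap by a restart: for any $t_0\in(0,R)$ one has $\nabla h(t_0)\in L^\infty$ by the definition of $\X_R$, hence $h(t_0)\in\vmo$ by Lemma~\ref{l:vmo}, so $\|h(t_0)\|_{\B^0_{R'}}$ can be made as small as one likes by shrinking $R'$. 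On $[t_0,t_0+R'^4]$ the solution is then the unique small fixed point of Theorem~\ref{t:koch}, and only now does the smallness needed for absorption become available; this is the content of Proposition~\ref{p:smooth}.

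There is a second, related issue: the inequality
$N_{m+2}(t)\le C_m+C_0\|h\|_{\X_R}\,N_{m+2}(t)$
is vacuous unless $N_{m+2}(t)<\infty$ to begin with, and that is exactly what you are trying to prove. The paper avoids this circularity by running the estimates not on $h$ but on the Picard iterates $H_n$, which are smooth by construction; one shows $\sup_n\|H_n\|_{\X_R,m}<\infty$ and $\|H_{n+1}-H_n\|_{\X_R,m}\to0$, and then passes to the limit. Finally, your split at $t/2$ makes the feedback constant grow like $2^{(m+2)/4}$; the paper takes the split point at $t(1-\eps)$ with $\eps=(m+d)^{-4d}$ precisely so that the top-order coefficient $c_5$ is independent of $m$, which removes the need for the vaguely sketched H\"older refinement.
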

\begin{proof}
If the initial condition is small enough in $\B_R^0$, the statement follows
from Proposition~\ref{p:smooth} below. In the general case we notice that
if $h\in\X_R$, then $\nabla h(t)$ is bounded for all $t\in(0,R)$, therefore
$h(t)\in\vmo$, by Lemma~\ref{l:vmo}. The conclusion then follows again from
Proposition~\ref{p:smooth}.
\end{proof}
In order to complete the proof of the above theorem, we need the following
proposition, which gives also a better estimate of the solution near $t=0$
if the initial condition is small enough.
\begin{proposition}\label{p:smooth}
There exists $\delta>0$ such that if $\|h_0\|_{\B^0}<\delta$, then the solution
to \eqref{e:sg} granted by Theorem~\ref{t:koch} is in $\X^m$ for all $m\geq1$.

If $R>0$ and $\|h_0\|_{\B^0_R}<\delta$, then the solution to \eqref{e:sg}
granted by Theorem~\ref{t:koch} is in $\X_R^m$ for all $m\geq1$.
\end{proposition}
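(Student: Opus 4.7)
The plan is to argue by induction on $m$, closing a bootstrap via the mild identity $h = \e^{-tA} h_0 - \mapV(h,h)$. The base case $m = 0$ is precisely the existence statement of Theorem~\ref{t:koch}; assuming $h \in \X^m$ (resp.\ $\X_R^m$), the task is to upgrade to $h \in \X^{m+1}$ (resp.\ $\X_R^{m+1}$). This reduces to two smoothing estimates: a linear one for the semigroup and a bilinear one for $\mapV$.

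The semigroup bound $\|\e^{-tA} h_0\|_{\X,j} \leq c_j \|h_0\|_{\B^0}$ for every $j \geq 0$ follows from the factorisation $\e^{-tA} = \e^{-tA/2}\e^{-tA/2}$, the base estimate $\|\e^{-tA/2} h_0\|_\X \leq c\|h_0\|_{\B^0}$ granted by Proposition~\ref{p:icok}, and the fact that $D^\alpha \e^{-tA/2}$ maps $L^\infty$ to $L^\infty$ with norm $c_\alpha (t/2)^{-|\alpha|/4}$ (all derivatives of $g$ lie in $L^1$).

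The core step is the bilinear estimate. I would split the defining time integral of $\mapV$ at $s=t/2$. On $[0,t/2]$, where $t-s\geq t/2$, I put all $m+2$ derivatives on the kernel $\Delta G(t-s,\cdot)$, bound $\|\nabla h(s)\cdot\nabla k(s)\|_\infty \leq s^{-1/2}\|h\|_\X\|k\|_\X$ pointwise, and integrate in $s$ to obtain the weight $ct^{-(m+2)/4}\|h\|_\X\|k\|_\X$. On $[t/2,t]$, only one derivative may go onto the kernel (so that the factor $(t-s)^{-3/4}$ remains integrable), forcing me to write $D^\alpha = D^{\alpha_1}D^{\alpha_2}$ with $|\alpha_1|=1$ and $|\alpha_2|=m+1$, commute $D^{\alpha_2}$ inside the convolution, and expand via Leibniz. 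The sum splits into \emph{interior} terms $D^\beta\nabla h\cdot D^\gamma\nabla k$ with $|\beta|,|\gamma|\leq m$, each bounded in $L^\infty$ by $s^{-(m+3)/4}\|h\|_{\X^m}\|k\|_{\X^m}$, together with the two unavoidable \emph{boundary} terms of the form $\nabla h \cdot D^{m+1}\nabla k$ and its symmetric, which are only bounded by $s^{-(m+3)/4}\|h\|_\X\|k\|_{\X,m+1}$ and $s^{-(m+3)/4}\|h\|_{\X,m+1}\|k\|_\X$. Integrating $(t-s)^{-3/4}$ on $[t/2,t]$ yields the bilinear inequality
$\|\mapV(h,k)\|_{\X,m+1} \leq C_m\|h\|_{\X^m}\|k\|_{\X^m} + c\bigl(\|h\|_\X\|k\|_{\X,m+1} + \|h\|_{\X,m+1}\|k\|_\X\bigr)$.

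Plugging into the mild identity gives $\|h\|_{\X,m+1} \leq c_{m+1}\|h_0\|_{\B^0} + C_m\|h\|_{\X^m}^2 + 2c\|h\|_\X\|h\|_{\X,m+1}$. Choosing $\delta$ so small that the constant $K$ from Theorem~\ref{t:koch} satisfies $2cK<1$, the last summand is absorbed on the left, producing a finite bound on $\|h\|_{\X,m+1}$ and closing the induction. The main obstacle is precisely in those boundary Leibniz terms: the integrability constraint $|\alpha_1|\leq 1$ on the kernel side leaves no room to avoid them, and they can only be handled by exploiting smallness inherited from $\|h_0\|_{\B^0}\leq\delta$. To make the absorption rigorous rather than circular, I would run the argument on the Picard iterates $(H_n)$ from~\eqref{e:iter}, each of which is a priori smooth (so $\|H_n\|_{\X,m+1}$ is finite at every stage); the bilinear estimate then gives a contracting recursion $\|H_{n+1}\|_{\X,m+1}\leq c''+2cK\|H_n\|_{\X,m+1}$ with $2cK<1$, and the uniform bound passes to the limit. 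The local case $\X_R^{m+1}$ is verbatim, since every constant above is independent of $R$.
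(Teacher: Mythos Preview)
Your overall scheme is the same as the paper's --- semigroup smoothing, a bilinear estimate for $\mapV$ with a time-splitting, and an induction closed on the Picard iterates --- but the split at $s=t/2$ does not give what you need. The constant $c$ multiplying the boundary terms $\|h\|_\X\|k\|_{\X,m+1}$ is \emph{not} uniform in $m$: on $[t/2,t]$ you have to estimate
\[
\int_{t/2}^{t}(t-s)^{-3/4}s^{-(m+3)/4}\,ds
  = t^{-(m+2)/4}\int_{1/2}^{1}(1-\sigma)^{-3/4}\sigma^{-(m+3)/4}\,d\sigma,
\]
and the last integral is of order $2^{(m+3)/4}$. Hence your recursion on iterates reads $\|H_{n+1}\|_{\X,m+1}\le c''+c_m K\,\|H_n\|_{\X,m+1}$ with $c_m\to\infty$, so the contraction condition $c_m K<1$ forces $K$ (and therefore $\delta$) to shrink with $m$, contradicting the single $\delta$ demanded by the statement. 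The paper repairs exactly this point by splitting at $s=t(1-\eps)$ with an $m$-dependent $\eps=(m+d)^{-4d}$: the near-$t$ interval then contributes a prefactor $4\eps^{1/4}(1-\eps)^{-(m+2)/4}$ which, after multiplying by the $O(m^d)$ multi-index count, stays bounded uniformly in $m$, yielding an absolute constant $c_5$ in front of the boundary terms. The price is a large $m$-dependent constant on the $[0,t(1-\eps)]$ piece, but that piece multiplies only $\|h\|_{\X_R}\|k\|_{\X_R}$ and is allowed to grow with $m$.

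A secondary gap: from a uniform bound $\sup_n\|H_n\|_{\X,m+1}<\infty$ you cannot directly conclude $\|h\|_{\X,m+1}<\infty$, since the convergence $H_n\to h$ is only in $\X$. The paper closes this by proving that $(H_n)$ is Cauchy in every $\X_R^m$ (a second induction on $m$, again using the uniform constant $c_5$), so that the limit is taken in the stronger norm.
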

We start by giving a slight generalization of~\eqref{e:icokglobal}
and~\eqref{e:icoklocal}.
\begin{lemma}\label{l:ichigh}
Let $0<R\leq\infty$ and $k\in\B_R^0$, then for every $m\geq0$,
\begin{equation}\label{e:highic}
  \sup_{t\leq R}\Big\{t^{\frac{m+1}4}\sum_{|\alpha|=m+1} \bigl\|D^\alpha(\e^{-tA}k)\bigr\|_\infty\Big\}
    \leq c m^d (m+1)^{\frac{m+1}4}\|\nabla g\|_{L^1(\R^d)}^m\|k\|_{\B_R^0}.
\end{equation}
\end{lemma}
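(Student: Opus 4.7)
The plan is to extend the left-hand inequality of Proposition~\ref{p:icok}, which handles the case $m=0$, to arbitrary order of derivatives by splitting the semigroup into $m+1$ equal time steps and distributing exactly one derivative to each factor. This keeps the $\|\nabla g\|_{L^1}$ dependence linear per step (rather than producing higher-order derivatives of $g$) and gives clean control of the $m$-dependent constants.

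Fix a multi-index $\alpha$ with $|\alpha|=m+1$ and write $\alpha = e_{i_1}+\cdots+e_{i_{m+1}}$. Set $t_j = t/(m+1)$ for $j=1,\dots,m+1$. Using the semigroup property $\e^{-tA} = \e^{-t_1 A}\cdots\e^{-t_{m+1}A}$ and the fact that derivatives commute with convolution, I would write
\[
  D^\alpha(\e^{-tA}k) = \partial_{i_1}\e^{-t_1 A}\,\partial_{i_2}\e^{-t_2 A}\cdots\partial_{i_{m+1}}\e^{-t_{m+1}A}k.
\]
For the innermost factor I would invoke the $m=0$ bound (i.e.\ Proposition~\ref{p:icok} together with the equivalence of $\B_R$ and $\B_R^0$) to get
\[
  \|\partial_{i_{m+1}}\e^{-t_{m+1}A}k\|_\infty \leq c\,t_{m+1}^{-1/4}\|k\|_{\B_R^0},
\]
which is admissible as long as $t_{m+1}\leq R^4$. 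For each of the remaining $m$ outer factors I would use the straightforward $L^\infty\to L^\infty$ convolution bound
\[
  \|\partial_{i_j}\e^{-t_j A}f\|_\infty \leq \|\partial_{i_j} G(t_j,\cdot)\|_{L^1}\|f\|_\infty = t_j^{-1/4}\|\partial_{i_j}g\|_{L^1}\|f\|_\infty \leq t_j^{-1/4}\|\nabla g\|_{L^1}\|f\|_\infty,
\]
where I used the scaling $G(t,x)=t^{-d/4}g(xt^{-1/4})$. Multiplying these $m+1$ estimates and inserting $t_j = t/(m+1)$ gives
\[
  \|D^\alpha(\e^{-tA}k)\|_\infty \leq c\,\|\nabla g\|_{L^1}^m\,\|k\|_{\B_R^0}\,(m+1)^{(m+1)/4}\,t^{-(m+1)/4}.
\]

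To finish, I would sum over all multi-indices $\alpha$ with $|\alpha|=m+1$; their number is $\binom{m+d}{d-1}\leq c\,m^{d-1}\leq c\,m^d$, which after multiplying by $t^{(m+1)/4}$ and taking the supremum produces the stated inequality. The local version is handled identically since $t_j = t/(m+1)\leq R^4$ whenever $t\leq R^4$, so the innermost bound remains admissible inside the local norm. The only real subtlety is the choice of the equal splitting $t_j = t/(m+1)$: any attempt to absorb several derivatives into a single semigroup factor would replace $\|\nabla g\|_{L^1}^m$ by norms of higher derivatives of $g$, which we are not allowed to use, and would also produce a worse power of $m$ in the constant.
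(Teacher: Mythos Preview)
Your proof is correct and follows the same approach as the paper. Both split the semigroup into $m+1$ equal time steps of size $t/(m+1)$, apply the $L^\infty\to L^\infty$ bound $\|\partial_{x_i}\e^{-sA}\|_{L^\infty\to L^\infty}\le s^{-1/4}\|\nabla g\|_{L^1}$ to the $m$ outer factors, and use the $m=0$ case (i.e.\ the equivalence $\B_R\sim\B_R^0$ from Proposition~\ref{p:icok}) on the innermost factor; you are simply more explicit than the paper about the role of that innermost step.
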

\begin{proof}
Since for $|\alpha|=m+1$,
\[
  D^\alpha(\e^{-tA}k)
    = \prod_{i=1}^{d}(\partial_{x_i}^{\alpha_i}\e^{-\frac{\alpha_i}{m+1}tA}) k,
\]
it is sufficient to show that the operator $\partial_{x_i}\e^{-tA}$ maps
$L^\infty(\R^d)$ into itself with operator norm
$\|\partial_{x_i}\e^{-tA}\|_{L^\infty\to L^\infty}\leq t^{-1/4}\|\nabla g\|_{L^1(\R^d)}$.
This is immediate since by a change of variables,
\[
  t^\frac14 |\partial_{x_i}(\e^{-tA}k)(x)|
      = t^{\frac{d}4}\Bigl|\int_{\R^d} (\partial_{x_i}g)\bigl(\tfrac{x-y}{t^{1/4}}\bigr)k(y)\,dy\Bigr|
   \leq \|\nabla g\|_{L^1(\R^d)}\|k\|_\infty.
\]
Finally, $\#(\{\alpha:|\alpha|=m+1\})=\binom{m+d}{d-1}\leq c m^d$.
\end{proof}
\begin{lemma}\label{l:nonlinhigh}
There is $c_5>0$ such that for $m\geq1$, $0<R\leq\infty$ and $h,k\in\X_R^m$,
\begin{equation}\label{e:highnonlin}
\begin{aligned}
  \|\mapV(h,k)\|_{\X_R,m}
    &\leq c_5 m^d(m+1)^{\frac{m+3}{2}} \|\nabla g\|_{L^1(\R^d)}^m \|g\|_{W^{3,1}(\R^d)} \|h\|_{\X_R} \|k\|_{\X_R}\\
    &\quad + c_5 \|h\|_{\X_R} \|k\|_{\X_{R,m}}
           + c_5 \|h\|_{\X_{R,m}} \|k\|_{\X_R}\\
    &\quad + c_5 m^d\sum_{j=1}^{m-1} \binom{m}{j}\|h\|_{\X_R,j} \|k\|_{\X_R,m-j}.
\end{aligned}
\end{equation}
\end{lemma}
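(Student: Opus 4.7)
The plan is to bound $D^\alpha\mapV(h,k)(t,x)$ for each multi-index $|\alpha|=m+1$ by splitting the time integral $[0,t]=[0,t/2]\cup[t/2,t]$: on the first interval we can afford to put every derivative on the kernel (since $t-s\ge t/2$ is bounded away from zero), while on the second interval we redistribute most derivatives onto $\nabla h\cdot\nabla k$ via Leibniz' rule (since $s\ge t/2$ is bounded away from zero). Once the pointwise bound for a single $\alpha$ is obtained, summing over the $\le c m^d$ multi-indices with $|\alpha|=m+1$ gives the global factor $m^d$.

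For the first region, starting from
\[
D^\alpha\mapV(h,k)(t,x)=\int_0^t\bigl(D^\alpha\Delta G(t-s,\cdot)\star(\nabla h(s)\cdot\nabla k(s))\bigr)(x)\,ds,
\]
I would factor the semigroup as $\e^{-(t-s)A}=\prod_{i=1}^{m+1}\e^{-(t-s)/(m+1)A}$ and distribute a single first-order derivative onto each of the first $m$ factors (each having $L^\infty\to L^\infty$ operator norm $\le(m+1)^{1/4}(t-s)^{-1/4}\|\nabla g\|_{L^1}$), while the remaining derivative plus $\Delta$ lands on the last factor, whose norm is $\le(m+1)^{3/4}(t-s)^{-3/4}\|g\|_{W^{3,1}}$. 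Combined with $\|\nabla h(s)\|_\infty\|\nabla k(s)\|_\infty\le s^{-1/2}\|h\|_\X\|k\|_\X$ and the elementary estimate $\int_0^{t/2}(t-s)^{-(m+3)/4}s^{-1/2}\,ds\le C\,t^{-(m+1)/4}$ (using $(t-s)\ge t/2$), multiplying by $t^{(m+1)/4}$ yields a contribution of order $(m+1)^{(m+3)/4}\|\nabla g\|_{L^1}^m\|g\|_{W^{3,1}}\|h\|_\X\|k\|_\X$, which is majorized by the first term on the right-hand side of \eqref{e:highnonlin}.

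For the second region, I would move one derivative $\partial_j$ from $D^\alpha$ onto the kernel so that the kernel $\partial_j\Delta G$ has $L^1$-norm $\le C(t-s)^{-3/4}\|g\|_{W^{3,1}}$, and the remaining order-$m$ operator $D^{\alpha-e_j}$ acts on $\nabla h\cdot\nabla k$. Leibniz' rule gives
\[
D^{\alpha-e_j}(\nabla h\cdot\nabla k)=\sum_{i=1}^{d}\sum_{\gamma\le\alpha-e_j}\binom{\alpha-e_j}{\gamma}D^{\gamma+e_i}h\cdot D^{\alpha-e_j-\gamma+e_i}k,
\]
and for $|\gamma|=l$ the bounds $\|D^{\gamma+e_i}h\|_\infty\le s^{-(l+1)/4}\|h\|_{\X,l}$ together with the analogous one for $k$ apply for all $0\le l\le m$ (in particular the offending norms $\|\cdot\|_{\X,m+1}$ are avoided precisely because we kept one derivative on the kernel). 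The Vandermonde identity $\sum_{|\gamma|=l,\gamma\le\alpha-e_j}\binom{\alpha-e_j}{\gamma}=\binom{m}{l}$ and the estimate $\int_{t/2}^t(t-s)^{-3/4}s^{-(m+2)/4}\,ds\le C\,t^{-(m+1)/4}$ (using $s\ge t/2$) combine, after multiplication by $t^{(m+1)/4}$, into a bound proportional to $\sum_{l=0}^{m}\binom{m}{l}\|h\|_{\X,l}\|k\|_{\X,m-l}$; splitting off the endpoints $l=0$ and $l=m$ from the interior range $1\le l\le m-1$ reproduces the last three terms of \eqref{e:highnonlin}.

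The main technical obstacle, beyond the routine scaling of the bi-harmonic kernel, is the combinatorial bookkeeping when summing over $|\alpha|=m+1$. For the interior contributions each multi-index contributes independently, producing the explicit $m^d$ prefactor in front of the $\binom{m}{j}$-sum. For the two endpoint contributions $\|h\|_\X\|k\|_{\X,m}$ and $\|h\|_{\X,m}\|k\|_\X$, however, a naive sum over $\alpha$ would yield a spurious $m^d$ factor; to match the lemma one needs a short multiplicity argument showing that each order-$(m+1)$ derivative $D^{\alpha'}k$ (resp.\ $D^{\alpha'}h$) arises only a bounded, dimension-dependent number of times among the pairs $(\alpha,i)$ indexing the endpoint sum, so that the $\alpha$-summation is absorbed directly into the norm $\|k\|_{\X,m}$ (resp.\ $\|h\|_{\X,m}$) without combinatorial blow-up.
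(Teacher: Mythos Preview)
Your overall strategy---split the time integral, put all derivatives on the kernel near $s=0$, and use Leibniz near $s=t$---matches the paper's. The gap is in the choice of splitting point. When you write
\[
\int_{t/2}^t(t-s)^{-3/4}s^{-(m+2)/4}\,ds\le C\,t^{-(m+1)/4},
\]
the constant $C$ is \emph{not} independent of $m$: bounding $s^{-(m+2)/4}\le(t/2)^{-(m+2)/4}$ costs a factor $2^{(m+2)/4}$. This exponential-in-$m$ factor contaminates every term coming from the second region, in particular the endpoint terms $\|h\|_{\X_R}\|k\|_{\X_R,m}$ and $\|h\|_{\X_R,m}\|k\|_{\X_R}$. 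Your multiplicity argument for the endpoints is correct and does remove the combinatorial $m^d$ arising from the sum over $|\alpha|=m+1$, but it cannot touch the $2^{m/4}$ coming from the time integral. Since the statement requires $c_5$ to be uniform in $m$ (and this uniformity is exactly what is exploited in the subsequent induction, where one needs $2c_5K_0<1$ for \emph{all} $m$), the proof as written does not give the lemma.

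The paper fixes this by splitting at $t(1-\eps)$ with an $m$-dependent $\eps=(m+d)^{-4d}$. The second-region integral then carries the prefactor $\eps^{1/4}(1-\eps)^{-(m+2)/4}$; the choice of $\eps$ makes $(1-\eps)^{-(m+2)/4}$ bounded while $\eps^{1/4}=(m+d)^{-d}$, and this small factor cancels the $(m+d)^d$ from summing over multi-indices. With this adjustment your argument goes through without any separate multiplicity trick for the endpoints. The price is paid on the first region, where $\eps^{-(m+3)/4}$ inflates the constant in front of $\|h\|_{\X_R}\|k\|_{\X_R}$, but that term is allowed to grow with $m$ in the statement.
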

\begin{proof}
Fix $m\geq1$, $0<R\leq\infty$, $t\leq R$ and $h, k\in\X_R^m$. 
Consider a value $\eps\in(0,1)$ which will be specified later, and let
$|\alpha|=m+1$. Since $|\alpha|\geq 1$, there is $i\leq d$ such that $\alpha_i\geq1$.
So assume without loss of generality that $a_1\geq1$ and let $\alpha'=\alpha-(1,0,\dots,0)$.
\[
\begin{aligned}
  D^\alpha\mapV(h,k)(t)
    & = \int_0^t D^\alpha\Delta\bigl(\e^{-(t-s)A}(\nabla h(s)\nabla k(s))\bigr)\,ds\\
    & = \int_0^{t(1-\eps)} D^\alpha\Delta\bigl(\e^{-(t-s)A}(\nabla h(s)\nabla k(s))\bigr)\,ds\\
     &\hspace*{2cm}+ \int_{t(1-\eps)}^t D^\alpha\Delta\bigl(\e^{-(t-s)A}(\nabla h(s)\nabla k(s))\bigr)\,ds\\
    & = \term{1} + \term{2}.
\end{aligned}
\]
For the term $\term{1}$ we use the factorization introduced in the previous
lemma and we proceed as in the proof of Lemma~\ref{l:map},
\[
\begin{aligned}
  |\,\term{1}\,|
    &\leq (m+1)^{\frac{m+3}4}\|\nabla g\|_{L^1}^m\|\partial_{x_1}\Delta g\|_{L^1}\|h\|_{\X_R}\|k\|_{\X_R}
            \int_0^{t(1-\eps)}s^{-\frac12}(t-s)^{-\frac{m+3}4}\,ds\\
    &\leq 2\sqrt{1-\eps}\bigl(\tfrac{m+1}{\eps}\bigr)^{\frac{m+3}{4}} t^{-\frac{m+1}{4}}
            \|\nabla g\|_{L^1}^m \|\partial_{x_1}\Delta g\|_{L^1} \|h\|_{\X_R}\|k\|_{\X_R}.
\end{aligned}
\]
For the second term we use Leibniz formula,
\[
  \term{2}
    = \sum_{\beta\leq\alpha'}\binom{\alpha'}{\beta}\int_{t(1-\eps)}^t
        \partial_{x_1}\Delta\e^{-(t-s)A}(D^\beta\nabla h)(D^{\alpha'-\beta}\nabla k)\,ds
\]
and, as in the proof of Lemma~\ref{l:map},
\[
\begin{aligned}
  \term{2}
    &\leq \sum_{\beta\leq\alpha'}\binom{\alpha'}{\beta} \|\partial_{x_1}\Delta g\|_{L^1}\|h\|_{\X_{R,|\beta|}}\|k\|_{\X_{R,m-|\beta|}}
            \int_{t(1-\eps)}^t\frac1{s^{\frac{m+2}{4}}(t-s)^{\frac34}}\,ds\\
    &\leq \frac{4\eps^{\frac14}}{(1-\eps)^{\frac{m+2}{4}}}t^{-\frac{m+1}{4}}\|\partial_{x_1}\Delta g\|_{L^1}
            \sum_{\beta\leq\alpha'}\binom{\alpha'}{\beta} \|h\|_{\X_{R,|\beta|}}\|k\|_{\X_{R,m-|\beta|}}
\end{aligned}
\]
If we set $\eps=\tfrac{1}{(m+d)^{4d}}$ the term $4\eps^{1/4}(1-\eps)^{-(m+2)/4}(m+d)^d$
is uniformly bounded in $m$ (we recall that the number of multi--indices
$\alpha$ such that $|\alpha|=m+1$ is bounded by $(m+d)^d$) and so by summing
up over $\alpha$ the estimates for $\term{1}$ and $\term{2}$ together show
the lemma.
\end{proof}
As in the proof of Theorem~\ref{t:koch}, define $H_0=0$ and
\[
  H_{n+1}(t)
    = \e^{-tA}h_0  - \mapV(H_n,H_n)(t).
\]
\begin{lemma}
There is $\delta'>0$ such that if $0<R\leq\infty$ and $\|h_0\|_{\B_R^0}<\delta$,
then for every $m\geq0$ there is $K_m>0$ such that
\[
  \|H_n\|_{\X_R,m}\leq K_m.
\]
\end{lemma}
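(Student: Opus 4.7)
The plan is to argue by induction on $m$, exploiting the recursion $H_{n+1}=\e^{-tA}h_0-\mapV(H_n,H_n)$ inherited from the proof of Theorem~\ref{t:koch}. The base case $m=0$ is already contained in that proof: provided $\delta'$ is no larger than the threshold $\delta$ of Theorem~\ref{t:koch}, we obtain a uniform bound $\|H_n\|_{\X_R}\le K_0$ for all $n$, where $K_0$ may be chosen proportional to $\delta'$ (for instance $K_0=\frac{1}{2c_4}(1-\sqrt{1-4c_4\delta'/c_1})$, which behaves like $\delta'/c_1$ for small $\delta'$).

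For the inductive step, fix $m\ge1$ and suppose that uniform constants $K_0,K_1,\dots,K_{m-1}$ have already been produced. Apply Lemma~\ref{l:ichigh} to the linear contribution to obtain $\|\e^{-tA}h_0\|_{\X_R,m}\le C_m^{(1)}\|h_0\|_{\B_R^0}$, with $C_m^{(1)}$ depending only on $m$, $d$ and $\|\nabla g\|_{L^1}$. Then apply Lemma~\ref{l:nonlinhigh} to $\mapV(H_n,H_n)$: the first line of \eqref{e:highnonlin} is bounded by $C_m^{(2)}\|H_n\|_{\X_R}^2\le C_m^{(2)}K_0^2$; the final "low--low" sum is, by the inductive hypothesis, bounded by $C_m^{(3)}:=c_5 m^d\sum_{j=1}^{m-1}\binom{m}{j}K_jK_{m-j}$, independent of $n$. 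The only self-referential contribution is the middle line, which gives $2c_5\|H_n\|_{\X_R}\|H_n\|_{\X_R,m}\le 2c_5K_0\|H_n\|_{\X_R,m}$.

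Collecting everything yields a scalar recursion
\[
\|H_{n+1}\|_{\X_R,m}\le A_m+2c_5K_0\|H_n\|_{\X_R,m},
\qquad
A_m:=C_m^{(1)}\|h_0\|_{\B_R^0}+C_m^{(2)}K_0^2+C_m^{(3)},
\]
with $A_m$ finite and independent of $n$. The only delicate point -- and the main obstacle the proof must overcome -- is closing this recursion uniformly in $n$: we need $2c_5K_0<1$, so that iterating gives the geometric bound $K_m:=A_m/(1-2c_5K_0)$. Since $K_0$ is proportional to $\delta'$ while $c_5$ is a fixed universal constant, it suffices to shrink $\delta'$ once and for all so that, say, $2c_5K_0\le 1/2$; crucially the contraction coefficient $2c_5K_0$ does not depend on $m$, so a single choice of $\delta'$ works simultaneously for all $m$.

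Starting from $\|H_0\|_{\X_R,m}=0\le K_m$, an elementary induction on $n$ then yields $\|H_n\|_{\X_R,m}\le K_m$ for every $n\ge0$, completing the induction on $m$. I emphasize that the constants $K_m$ themselves may grow very rapidly in $m$ (through the combinatorial factor $m^d$, the factorial-like $(m+1)^{(m+3)/2}$ in $C_m^{(2)}$, and the convolution sum $C_m^{(3)}$), but this growth is harmless: the lemma only asserts existence of $K_m$ for each fixed $m$, which is exactly what is needed to pass to the limit $H_n\to h$ in each $\X_R^m$ in the proof of Proposition~\ref{p:smooth}.
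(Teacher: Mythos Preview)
Your proof is correct and follows essentially the same approach as the paper: induction on $m$, with the base case supplied by Theorem~\ref{t:koch}, and the inductive step obtained by combining Lemma~\ref{l:ichigh} for the linear piece with Lemma~\ref{l:nonlinhigh} for $\mapV(H_n,H_n)$, yielding the scalar recursion $\|H_{n+1}\|_{\X_R,m}\le A_m+\lambda\|H_n\|_{\X_R,m}$ with $\lambda=2c_5K_0<1$ after shrinking $\delta'$. The paper unrolls the recursion explicitly rather than checking the fixed bound by induction on $n$, but the argument is the same.
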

Let us remark that with an explicit estimate of the constants 
$K_m$, and in particular their growth in terms of $m$, 
one could show that solutions are analytic in space.
For simplicity of presentation, we will not focus on this.
\begin{proof}
If $\|h_0\|_{\B_R^0}$ is small enough, the proof of Theorem~\ref{t:koch} shows
that there is $K_0$ such that $\|H_n\|_{\X_R}\leq K_0$.  By possibly taking
$\|h_0\|_{\B_R^0}$ smaller, we can assume that $\lambda=2c_5 K_0<1$, where
$c_5$ is given in Lemma~\ref{l:nonlinhigh}. We prove the statement by
induction: the case $m=0$ has been already proved. Set
$a_m=c_5 m^d (m+1)^{(m+3)/2}\|\nabla g\|_{L^1}^m\|g\|_{W^{3,1}}$ (this is
the coefficient appearing in the first line of formula~\eqref{e:highnonlin})
and $b_m=c m^d (m+1)^{(m+1)/4}\|\nabla g\|_{L^1}^m$ (this appears in formula
\eqref{e:highic}), then by Lemmas~\ref{l:ichigh} and \ref{l:nonlinhigh},
\[
\begin{aligned}
  \|H_{n+1}\|_{\X_R,m}
    &\leq \|H_1\|_{\X_R,m} + \|\mapV(H_n,H_n)\|_{\X_R,m}\\
    &\leq b_m K_0 + a_m K_0^2 + 2c_5K_0\|H_n\|_{\X_R,m}\\
    &\quad + c_5 m^d\sum_{j=1}^{m-1}\tbinom{m}{j}\|H_n\|_{\X_R,j}\|H_n\|_{\X_R,m-j}\\
    &\leq \Bigl(b_m K_0 + a_m K_0^2 + c_5 m^d \sum_{j=1}^{m-1}\tbinom{m}{j} K_j K_{m-j}\Bigr) + \lambda \|H_n\|_{\X_R,m},
\end{aligned}
\]
so that by recurrence and again Lemma~\ref{l:ichigh},
\[
\begin{aligned}
  \lefteqn{\|H_{n+1}\|_{\X_R,m}\leq}\\
\quad &\leq \Bigl(b_m K_0 + a_m K_0^2 + c_5 m^d \sum_{j=1}^{m-1}\tbinom{m}{j} K_j K_{m-j}\Bigr)(1+\dots+\lambda^{n-1}) + \lambda^n \|H_1\|_{\X_R,m}\\
      &\leq \frac1{1-\lambda}\Bigl(b_m K_0 + a_m K_0^2 + c_5 m^d \sum_{j=1}^{m-1}\tbinom{m}{j} K_j K_{m-j}\Bigr) + b_m K_0,
\end{aligned}
\]
and the last line in the formula above provides $K_m$.
\end{proof}
\begin{proof}[Proof of Proposition~\ref{p:smooth}]
Theorem~\ref{t:koch} ensures that if $\|h_0\|_{\B_R^0}$ is small enough, then
there is $\lambda = 2c_4 K_0<1$ (where the number $K_0$ is given by previous lemma)
such that $\|H_{n+1}-H_n\|_{\X_R}\leq c \lambda^n$. We prove by induction that
there are numbers $C_m>0$ and $\mu\in(0,1)$ such that
\[
  \|H_{n+1}-H_n\|_{\X_R,m}\leq C_m \mu^n,
    \qquad m\geq0,
\]
if $\|h_0\|_{\B_R^0}$ is small enough. Let $\lambda=2 c_5 K_0$ (where $c_5$ has been
introduced in Lemma~\ref{l:nonlinhigh}), assume $\lambda<1$ and let $\lambda<\mu<1$.
We have already verified that the inductive claim is true for $m=0$. Assume the
claim is true for $0$, \dots, $m-1$, then by Lemma~\ref{l:nonlinhigh} and the
inductive assumption,
\[
\begin{aligned}
  \|&H_{n+1} - H_n\|_{\X_R,m}\\
    &\leq \|\mapV(H_n,H_n-H_{n-1})\|_{\X_R,m} + \|\mapV(H_n-H_{n-1},H_{n-1})\|_{\X_R,m}\\
    &\leq    \bigl[a_m (\|H_n\|_{\X_R} + \|H_{n-1}\|_{\X_R}) + c_5(\|H_n\|_{\X_R,m} + \|H_{n-1}\|_{\X_R,m})\bigr]\|H_n-H_{n-1}\|_{\X_R}\\
    &\hspace*{1cm} + c_5(\|H_n\|_{\X_R}+\|H_{n-1}\|_{\X_R})\|H_n-H_{n-1}\|_{\X_R,m}\\
    &\hspace*{1cm} + c_5 m^d \sum_{j=1}^{m-1}\tbinom{m}{j} (\|H_n\|_{\X_R,j}+\|H_{n-1}\|_{\X_R,j})\|H_n-H_{n-1}\|_{\X_R,m-j}\\
    &\leq \lambda\|H_n-H_{n-1}\|_{\X_R,m} + \widetilde{K}_m\mu^{n-1},
\end{aligned}
\]
where we have set $a_m=c_5 m^d (m+1)^{(m+3)/2}\|\nabla g\|_{L^1}^m\|g\|_{W^{3,1}}$
(the coefficient in the first line of~\eqref{e:highnonlin}),
$\widetilde{K}_m = 2 C_0 (a_m K_0 +c_5 K_m) + 2 c_5 m^d \sum_{j=1}^{m-1} \binom{m}{j} K_j C_{m-j}$,
and the constants $K_j$ are given by the previous lemma.
By recurrence (notice that $\mu>\lambda$), it is easy to see that for every $n$,
\[
\begin{aligned}
  \|H_{n+1} - H_n\|_{\X_R,m}
    &\leq \lambda^{n+1}\|H_1 - H_0\|_{\X_R,m} + \widetilde{K}_m\bigl(\lambda^{n-1} + \lambda^{n-2}\mu + \dots + \mu^{n-1}\bigr)\\
    &\leq \bigl(\lambda K_m + \tfrac1{\mu-\lambda}\widetilde{K}_m\bigr)\mu^n,
\end{aligned}
\]
which concludes the induction. In conclusion, the sequence $(H_n)_{n\in\N}$
converges in all spaces $\X_R^m$.
\end{proof}

\end{document}